\newcommand{\BB}{{\cal B}}
\newcommand{\EE}{{\cal E}}
\newcommand{\FF}{{\cal F}}
\newcommand{\MM}{{\cal M}}
\newcommand{\BR}{{\mathbb R}}
\newtheorem{theorem}{\bf Theorem}[section]
\newtheorem{proposition}[theorem]{\bf Proposition}
\newtheorem{lemma}[theorem]{\bf Lemma}
\theoremstyle{definition}
\newtheorem{definition}[theorem]{Definition}
\newtheorem{remark}[theorem]{Remark}
\numberwithin{equation}{section}
\begin{document}

\title {Uniqueness for an obstacle problem arising from  logistic-type equations
with fractional Laplacian}
\author {Tomasz Klimsiak\\
{\small Faculty of Mathematics and Computer Science,
Nicolaus Copernicus University} \\
{\small  Chopina 12/18, 87--100 Toru\'n, Poland}\\
{\small E-mail address: tomas@mat.umk.pl}}
\date{}
\maketitle
\begin{abstract}
We prove a uniqueness theorem for  the obstacle problem for   linear equations involving the fractional Laplacian  with  zero  Dirichlet exterior  condition. 
The problem under consideration arises as the limit of some logistic-type equations. 
Our result extends (and slightly strengthens)
the known corresponding results for the classical Laplace operator with zero boundary condition. 
Our proof, as compared with the known proof for the classical Laplace operator, 
is entirely new, and is based on the probabilistic potential theory. 
Its advantage is that it may be applied to
a wide class of integro-differential operators. 
\end{abstract}

\footnotetext{{\em Mathematics Subject Classification:}
Primary  35R35; Secondary   35R11}

\footnotetext{{\em Keywords:} Dirichlet fractional Laplacian, obstacle problem, logistic equation, intrinsic ultracontractivity.
}


\section{Introduction}

Let $D\subset\BR^d$, $d\ge2$, be a bounded Lipschitz domain, $D_0\subset D$ be a bounded {\em Dirichlet regular} domain (with respect to the fractional Laplacian). For $\alpha\in(0,2)$, we denote by  $(\Delta^{\alpha/2})_{|D}$ the  Dirichlet fractional Laplacian   on $D$ with zero exterior condition (see Section \ref{sec2} for details). In case $\alpha=2$, by  $(\Delta^{\alpha/2})_{|D}$ we mean the classical Laplace operator $\Delta_D$ on $D$ with zero boundary condition.
In the present paper, we prove a uniqueness result for the following
obstacle problem:
\begin{equation}
\label{eq1.1} \left\{
\begin{array}{l}\max\big\{-(\Delta^{\alpha/2})_{|D} u-au,u- \mathbb I_{D\setminus \overline D_0}\big\}=0,\medskip\\
u>0,\quad \mbox{on}\quad D,
\end{array}
\right.
\end{equation}
where $a$ is a positive constant, and
\begin{equation}
\label{eq1.1cbgsh} 
\mathbb I_{D\setminus \overline D_0}(x)=\left\{
\begin{array}{l}1,\quad\quad\,\,\quad x\in D\setminus \overline D_0,
\medskip\\ +\infty,\quad\quad x\in \overline D_0.
\end{array}
\right.
\end{equation}
Problem of this type arises in the study of asymptotic behaviour, as $p\rightarrow \infty$,
of logistic type equations
(see \cite{DDM,RT} for the case $\alpha=2$ and \cite{K:arx2} for equations with  $\alpha\in (0,2)$). Problem (\ref{eq1.1}) with $\alpha=2$ also arises as the limit of some  predator-prey models (see \cite{DD1,DHMP}).

From \cite{DDM} (in case $\alpha=2$) and \cite{K:arx2} (in case $\alpha\in(0,2)$) we know  that (\ref{eq1.1}) has a solution if and only if  $a\in[\lambda_1^D,\lambda_1^{D_0})$, where $\lambda_1^D, \lambda_1^{D_0}$ stand for the first eigenvalue of $-(\Delta^{\alpha/2})_{|D}$ and $-(\Delta^{\alpha/2})_{|D_0}$, respectively.
In the present paper, we concentrate on the uniqueness of  solutions to (\ref{eq1.1}). This problem is quite subtle and  difficult.  In \cite{DD2} it is investigated in case $\alpha=2$. Suppose that $D_0, D$ are smooth and $\overline{D}_0\subset D$.
The main result of \cite{DD2}  states that (\ref{eq1.1}) has at most one  solution  if $a\in(\lambda_1^{D},\lambda_1^{D_0})$.
This result is proved by using an equivalent free boundary formulation of (\ref{eq1.1})
and  tools from  the theory of variational inequalities and harmonic functions.

The method used in \cite{DD2} seems to be suitable only for  $\alpha=2$. To deal with
nonlocal operators, we propose a new method. It allows us to prove
that if $a\in(\lambda_1^{D},\lambda_1^{D_0})$, then
for any $\alpha\in(0,2]$ there exists at most one  solution to (\ref{eq1.1}).
This  generalizes the result of \cite{DD2} to nonlocal operators but also slightly strengthens  the known uniqueness result  for $\alpha=2$  because we assume that $D_0\subset D$ and not that $\overline{D}_0\subset D$ as in \cite{DD2}. Moreover, 
we consider less regular than in \cite{DD2} domains $D_0,D$ (see comments in \cite[Remark (i)]{DD2}).

In the present paper, we use a  definition of   solution to (\ref{eq1.1}), which is equivalent to, but  different from that of \cite{DD2}. Let $\EE_D$ denote the Dirichlet form associated with the operator $(\Delta^{\alpha/2})_{|D}$ (see Section \ref{sec2}). By a solution to (\ref{eq1.1}) we mean a strictly positive function $u\in \tilde H^{\alpha/2}(D)$ (closure of $C_c^\infty(D)$ in $H^{\alpha/2}(\BR^d)$) having the property that $u\le \mathbb I_{D\setminus \overline D_0}$ $m$-a.e.  and  such that for some bounded smooth nonnegative Borel measure $\nu$ on $D$ (called the reaction measure for $u$) we have
\[
\EE_D(u,v)=a(u,v)-\int_D\tilde v\,d\nu,\quad v\in \tilde H^{\alpha/2}(D),
\]
where $(\cdot,\cdot)$ stands for the usual inner product in $L^2(D;m)$ and $\tilde v$
is a quasi-continuous $m$-version of $v$ ($m$ stands for the Lebesgue measure on $\BR^d$). Moreover, we require that $u$ satisfies the  minimality condition, which says  that for every quasi-continuous  $\eta$ such that $u\le \eta\le \mathbb I_{D\setminus \overline D_0}$ $m$-a.e. we have
\[
\int_D (\eta-\tilde u)\,d\nu=0,
\]
where $\tilde u$ is a quasi-continuous $m$-version of $u$. In other words, $u$ is a solution of the following equation with measure data

\[
-(\Delta^{\alpha/2})_{|D}u=a u-\nu,
\]
where $\nu$ is a positive measure which acts only when $u$ touches the barrier.
Our proof of uniqueness is based on the following two crucial observations. The first  one is that if $u$ is a solution to (\ref{eq1.1}), then
\[
u\cdot \nu=\nu,
\]
so in fact,
\[
-(\Delta^{\alpha/2})_{|D}u=au-u\cdot \nu.
\]
Equivalently,
\[
\big(-(\Delta^{\alpha/2})_{|D}+\nu\big)u=a u.
\]
This shows that any  solution to (\ref{eq1.1}) is in fact an eigenfunction for the operator $-(\Delta^{\alpha/2})_{|D}+\nu$, i.e. the operator $-(\Delta^{\alpha/2})_{|D}$ perturbed by the smooth bounded measure $\nu$. It is well known that this  operator is a  self-adjoint  nonnegative operator on $L^2(D;m)$ generating a Markov $C_0$-semigroup  of contractions on $L^2(D;m)$. The second  crucial observation is that $\nu$ has  a compact support in $D$. This allows us to  prove, by using  some results of  Hansen \cite{Hansen}, that the Green function $G_D$ for $-(\Delta^{\alpha/2})_{|D}$ is comparable with the Green function $G^\nu_D$ for $-(\Delta^{\alpha/2})_{|D}+\nu$. These two facts, when combined with the sub and  supersolutions method (generalized in the present paper to the case of our obstacle problem), give the uniqueness result.

\section{Potential theory for fractional Laplacian on bounded domain}
\label{sec2}

For fixed $x\in\BR^d$ and $r>0$, we denote $B(x,r)=\{y\in\BR^d: |y-x|<r\}$.
In the whole paper, we assume that $D$ is an open bounded Lipschitz domain in $\BR^d$ with Lipschitz character $(r_0,\lambda)$, $r_0,\lambda>0$,
i.e. open nonempty subset of $\BR^d$ such that: for every $x\in \partial D$ there exists a function $\Gamma_x:\BR^{d-1}\rightarrow \BR$
such that
\[
|\Gamma_x(a)-\Gamma_x(b)|\le\lambda |a-b|,\quad a,b\in\BR^{d-1},
\]  
and there exists an orthogonal coordinate system $S_x$ such that if $y=(y_1,y_2,\dots,y_d)$ in $S_x$, then 
\[
B(x,r_0)\cap D=B(x,r_0)\cap \{y: y_d>\Gamma_x(y_1,y_2,\dots,y_{d-1})\}.
\]
We denote by $m$ the Lebesgue measure on $\BR^d$.
By $\BB(\BR^d)$ (resp. $\BB(D))$, we denote  the set of  Borel measurable functions on $\BR^d$ (on $D$) with 
values in $\BR\cup\{-\infty\}\cup\{\infty\}$. $\BB_b(D)$ is the subset of $\BB(D)$ consisting of all bounded functions.

\subsection{Dirichlet fractional Laplacian }

Let $\alpha\in (0,2)$. To define the Dirichlet fractional Laplacian on $D$, we first set
\[
D(\Delta^{\alpha/2})=\{u\in L^2(\BR^d;m): \int_{\BR^d}|x|^{2\alpha}|\hat u(x)|^2\,m(dx)<\infty\},
\]
and for $u\in D(\Delta^{\alpha/2})$, we set
\[
\widehat {\Delta^{\frac \alpha2} u}(x)=|x|^{\alpha}\hat u(x),\quad x\in\BR^d,
\]
where $\hat u$ stands for the Fourier transform of  $u$.
Let us consider the  form $(\EE, D(\EE))$ defined as
\[
\EE(u,v)=\int_{\BR^d}\hat u(x)\hat v(x)|x|^\alpha\,m(dx),\quad u,v\in D(\EE),
\]
where
\[
D(\EE)=\{u\in L^2(\BR^d;m): \int_{\BR^d}|x|^{\alpha}|\hat u(x)|^2\,m(dx)<\infty\}.
\]
By \cite[Example 1.4.1]{FOT},  $(\EE,D(\EE))$ is a regular symmetric Dirichlet form on $L^2(\BR^d;m)$. In the language of Sobolev spaces, $D(\EE)=H^{\frac \alpha2}(\BR^d)$ (see \cite[page 76]{McLean}).
When $u\in C^\infty_c(\BR^d)$ the more explicit formula for $-\Delta^{\alpha/2}u$ is known (see e.g. \cite[Example 1.4.1]{FOT}):
\[
-\Delta^{\alpha/2}u(x)=c_{d,\alpha}\lim_{r\searrow 0}\int_{\BR^d\setminus B(x,r)}\frac{u(x)-u(y)}{|x-y|^{d+\alpha}}\,dy,\quad x\in \BR^d.
\]

The capacity  Cap$:2^{\BR^d}\rightarrow \BR^+\cup\{\infty\}$ associated with $(\EE,D(\EE))$ is defined as follows: for an arbitrary open set $U\subset\BR^d$, we set
\begin{equation}
\label{eq2.1}
\mbox{\rm Cap}(U)=\inf\{\EE(u,u),\, u\in D(\EE),\, u\ge \mathbf{1}_U\,\, m\mbox{-a.e.}\},
\end{equation}
and then,  for an arbitrary $B\subset\BR^d$, we set
\begin{equation}
\label{eq2.2}
\mbox{\rm Cap}(B)=\inf\{\mbox{\rm Cap}(U): U\supset B,\, U\subset\BR^d,\, U\mbox{  open}\}.
\end{equation}
We say that some property holds $\EE$-quasi everywhere ($\EE$-q.e. in abbreviation) if it holds except possibly   a set of capacity Cap zero. Such sets shall be called $\EE$-exceptional.

Recall that a function $u$ on $\BR^d$ is called $\EE$-quasi-continuous if for every $\varepsilon>0$
there exists a closed set $F_\varepsilon\subset\BR^d$ such that Cap$(\BR^d\setminus F_\varepsilon)\le\varepsilon$
and $u_{|F_\varepsilon}$ is continuous. By \cite[Theorem 2.1.3]{FOT}, each $u\in D(\EE)$ admits
an $\EE$-quasi-continuous $m$-version. In what follows for given function 
$u\in\BB(\BR^d)$, we denote  by $\tilde u$ its   $\EE$-quasi-continuous $m$-version (whenever it exists).

We denote by  $(\EE_D,D(\EE_D))$ the part of $(\EE,D(\EE))$  on $D$. Recall that
\[
D(\EE_D)=\{u\in D(\EE): \tilde u=0,\,\mbox{q.e. on}\, \BR^d\setminus D\},\quad \EE_D(u,v)=\EE(u,v),\quad u,v\in D(\EE_D).
\]
By \cite[Theorem 4.4.3]{FOT}, $(\EE_D,D(\EE_D))$ is a regular symmetric Dirichlet form on $L^2(D;m)$. Therefore, by \cite[Sections 1.3,1.4]{FOT}, there exists a unique self-adjoint nonpositive definite
operator $(A_D,D(A_D))$ on $L^2(D;m)$ such that
\[
D(A_D)\subset D(\EE_D),\qquad \EE_D(u,v)=(-A_Du,v),\quad u\in D(A_D),\, v\in D(\EE_D).
\]
The operator $(A_D,D(A_D))$ is called the Dirichlet fractional Laplacian.
We put
\[
(\Delta^{\alpha/2})_{|D}:= A_D.
\]
By \cite[Theorem 4.4.3]{FOT}, $C_c^\infty(D)$ is a dense subspace of $D(\EE_D)$
in the norm determined by $\EE$. Therefore $D(\EE_D)=\tilde H^{\alpha/2}(D)$ (see \cite[page 77]{McLean}).
For $u\in C_c^\infty(D)$,
\[
(\Delta^{\alpha/2})_{|D}u(x)=\Delta^{\alpha/2}u(x)-\kappa_D(x)u(x),\quad x\in D,
\]
where
\[
\kappa_D(x)=c_{d,\alpha}\int_{\BR^d\setminus D}|x-y|^{-d-\alpha}\,dy,\quad x\in D.
\]
It is worth noting here
that $A_D\neq -(-\Delta_D)^{\alpha/2}$, where $\Delta_D$ is the Laplace operator with zero Dirichlet boundary condition on $D$. The latter operator is called the fractional Dirichlet Laplacian.


By replacing $(\EE,D(\EE))$ by $(\EE_D,D(\EE_D))$ and $\BR^d$ by $D$ in (\ref{eq2.1}) and (\ref{eq2.2}), we define the capacity Cap$_D$ associated with $\EE_D$, and then  we define the notions of $\EE_D$-exceptional sets and $\EE_D$-quasi-continuity. By \cite[Theorem 4.4.4]{FOT}, Cap and Cap$_D$ are strongly equivalent on $D$. Therefore, without ambiguity, we may write q.e.,  exceptional or quasi-continuous instead of $\EE$-q.e., $\EE_D$-q.e. $\EE$-exceptional, $\EE_D$-exceptional or $\EE$-quasi-continuous, $\EE_D$-quasi-continuous.

\subsection{Green functions and  transition functions}

We denote by  $(J_\beta)_{\beta>0}$, $(T_t)_{t\ge 0}$ the resolvent and the $C_0$-semigroup of contractions generated by  $A:=\Delta^{\alpha/2}$, respectively,
and by $(J^D_\beta)_{\beta>0}$, $(T^D_t)_{t\ge 0}$   the resolvent  and the $C_0$-semigroup of contractions generated by $(\Delta^{\alpha/2})_{|D}$, respectively.

Let $\partial$ be a one-point compactification of $D$.  
Let $\cal D$ denote  the set of all 
functions $\omega: [0,\infty)\to \BR^d\cup\{\partial\}$, that are right continuous and
possess the left limits for all $t\ge0$ (c\'adl\'ags), and  have the property that if $\omega(t)=\partial$,
then $\omega(s)=\partial,\, s\ge t$. We equip $\mathcal D$ with
the Skorokhod topology, see e.g. Section 12 of \cite{bil}. 
Define the {\em canonical process}: $X:\mathcal D\to\mathcal D$, $X_t(\omega):=\omega(t)$, $\omega \in \mathcal D$,
{\em shift operator}: $\theta_s:\mathcal D\to\mathcal D$, $\theta_s(\omega)(t)=\omega(t+s)$,  and   {\em life time}: $\zeta: \mathcal D\to [0,\infty]$,   $\zeta(\omega):=\inf\{t>0: X_t(\omega)=\partial\}$.
It is well known that there exists a rotation invariant $\alpha$-stable L\'evy process
$\mathbb{X}=(X,(P_x)_{x\in \BR^d},(\FF_t)_{t\ge0})$ on $\BR^d$
such that for every nonnegative $f\in \BB(\BR^d)$,
\begin{equation}
\label{eq2.4.1}
T_tf(x)=E_x f(X_t):=\int_{\mathcal D}f(X_t(\omega))\,dP_x(\omega),\quad x\in\BR^d,\, t\ge 0,\quad m\mbox{-a.e.}
\end{equation}
We let for $\beta>0$ and $f\in\BB^+(\BR^d)$
\[
R_\beta f(x)=E_x\int_0^\infty e^{-\beta t}f(X_t)\,dt,\quad x\in\BR^d.
\]
In the whole paper 
we adopt the convention that any function $f$ defined on a subset of  $\BR^d$ equals zero at $\partial$.
By \cite[Theorem 4.4.2]{FOT}, there exists a Hunt process $\mathbb{X}^D=(X,(P^D_x)_{x\in D\cup\{\partial\}},(\FF_t)_{t\ge0})$
on $D\cup\{\partial\}$ such that
\begin{equation}
\label{eq2.4.1ab}
T^D_tf(x)=E^D_x f(X_t):=\int_{\mathcal D}f(X_t(\omega))\,dP^D_x(\omega),\quad x\in D,\, t\ge 0,\quad m\mbox{-a.e.}
\end{equation}
Here $(\FF_t)_{t\ge 0}$ is a filtration (non-decreasing sequence of $\sigma$-fields of subsets of $\Omega$).
By \cite[Theorem 4.4.1, Theorem 4.4.2]{FOT}, for all $t\ge 0$ and nonnegative $f\in\BB(D)$,
\begin{equation}
\label{eq2.4.2abc}
P^D_tf(x):=E^D_x f(X_t)=E_x f(X_t)\mathbf{1}_{\{t<\tau_D\}},\quad x\in D,
\end{equation}
where
\[
\tau_D=\inf\{t>0: X_t\in \BR^d\setminus D\}.
\]
Hence 
\begin{equation}
\label{eq2.4.2abcd}
R^D_\beta f(x):=E^D_x\int_0^{\infty}e^{-\beta t}f(X_t)\,dt =E_x\int_0^{\tau_D}e^{-\beta t}f(X_t)\,dt,\quad x\in D.
\end{equation}
It is well known (see \cite[Exercise 4.2.1, Lemma 4.2.4]{FOT}) that there is $G_{\beta}\in\BB(\BR^d)\times \BB(\BR^d)$ (called the $\beta$-Green function) such that for every $f\in \BB^+(\BR^d)$,
\[
R_{\beta}f(x)=\int_{\BR^d}G_{\beta}(x,y)f(y)\,dy\quad  x\in\BR^d.
\]
Similarly, there is  $G_{D,\beta}\in\BB(D)\times \BB(D)$ (called the $\beta$-Green function for  $D$) such that for every $f\in \BB^+(D)$,
\[
R^D_\beta f(x)=\int_DG_{D,\beta}(x,y)f(y)\,dy \quad x\in D.
\]
Given a  nonnegative Borel measure $\mu$ on $D$, we set
\[
R^D_\beta\mu(x)=\int_DG_{D,\beta}(x,y)\,\mu(dy),\quad x\in D.
\]
Note that  $R^D_\beta f=J^D_\beta f$ $m$-a.e. for every $f\in L^2(D;m)$.
Moreover, by \cite[Theorem 4.2.3]{FOT}, $J^D_\beta f\in D(\EE_D)$ and $\widetilde{J^D_\beta f}=R^D_\beta f$ q.e. for $f\in L^2(D;m)$.
By \cite[Exercise 4.2.1]{FOT}, there exists a transition
function $p_D:\mathbb R^+\times D\times D\rightarrow \BR^+$ such that for every $f\in \BB^+(D)$,
\[
P_t^Df(x)=\int_Dp_D(t,x,y)f(y)\,dy\quad x\in D,\, t>0.
\]
It is well known (see e.g. \cite{Grzywny}) that $p_D(t,x,y)$ is finite and strictly positive for $x,y\in D, t>0$.
Given a nonnegative Borel measure $\mu$ on $D$, we set
\[
P^D_t\mu(x)=\int_Dp_D(t,x,y)\,\mu(dy),\quad x\in D,\, t>0.
\]
By \cite[Theorem 4.2.3]{FOT}, $T^D_t f\in D(\EE)$ and $\widetilde{T^D_t f}=P^D_tf$ q.e.  for
all $f\in L^2(D;m)$ and $t>0$. Moreover,
\[
G_{D,\beta}(x,y)=\int_0^\infty e^{-\beta t}p_D(t,x,y)\,dt,\quad x,y\in D.
\]
It follows that we can  set
\[
G_D(x,y):=\sup_{\beta>0}G_{D,\beta}(x,y)=\lim_{\beta\searrow 0}G_{D,\beta}(x,y),\quad x,y\in D.
\]
It is well known that $(\Delta^{\alpha/2})_{|D}: D((\Delta^{\alpha/2})_{|D})\rightarrow L^2(D;m)$ is a bijection. From the above definition of $G_D$ it follows that  for every $f\in L^2(D;m)$,
\[
J^Df:=(-(\Delta^{\alpha/2})_{|D})^{-1}f=\int_DG_D(\cdot,y)\,f(y)\,dy\quad m\mbox{-a.e. on }D.
\]
By \cite[Lemma 2.2.11]{FOT},  $J^Df\in D(\EE_D)$ and $\widetilde{J^Df}=R^Df$ q.e. for every $f\in L^2(D;m)$.
For a nonnegative Borel measure $\mu$ on $D$, we set
\[
R^D\mu(x)=\int_D G_D(x,y)\,\mu(dy),\quad x\in D.
\]
Recall that a positive Borel measurable function $u$ on $D$ is called excessive if 
\[
\sup_{t>0} P^D_t f(x)=f(x),\quad x\in D.
\]
By \cite[Lemma 4.2.4]{FOT}, for every positive Borel measure $\mu$, $R^D\mu$ is an excessive function. 

\subsection{Smooth measures}

An increasing sequence $\{F_n\}$ of closed subsets of $\BR^d$ (resp. $D$) is called a  generalized $\EE$-nest (resp. generalized $\EE_D$-nest)
if for every compact $K\subset \BR^d$ (resp. $K\subset D$), Cap$(K\setminus F_n)\rightarrow 0$ (resp. Cap$_D(K\setminus F_n)\rightarrow0$) as $n\rightarrow \infty$.
A Borel signed measure $\mu$ on $\BR^d$ (resp. $D$) is called $\EE$-smooth (resp. $\EE_D$-smooth)
if it charges no set of capacity Cap (resp. Cap$_D$) zero and there exists a generalized $\EE$-nest (resp. $\EE_D$-nest) such that $|\mu|(F_n)<\infty$, $n\ge 1$, where $|\mu|$ denotes the variation of $\mu$. Note  that if a Borel measure $\mu$ on $D$ is bounded, then it is $\EE$-smooth if and only if  it is $\EE_D$-smooth. This follows from the fact that Cap and Cap$_D$ are equivalent.

We  denote by $\MM_0$ (resp. $\MM_0(D)$) the set of  all $\EE$-smooth (resp. $\EE_D$-smooth) measures on $\BR^d$ (resp. $D$). We also set
\[
\mathbb M_0(D)=\{\mu\in\MM_0(D):R^D|\mu|<\infty \mbox{ q.e.}\},\quad \mathbb \MM_{0,b}(D)=\{\mu\in\MM_0(D):|\mu|(D)<\infty\}.
\]
By \cite[Proposition 5.13]{KR:JFA},
\begin{equation}
\label{eq2.3.1}
\MM_{0,b}(D)\subset \mathbb M_0(D).
\end{equation}
Also note that if $\mu$ is a Borel measure such that $R^D|\mu|<\infty$ on $D$, then
$\mu\in \MM_0(D)$ (see \cite[Exercise 4.2.2]{FOT}). We say that an $\EE_D$-smooth measure
$\mu$ belongs to the class $S_0(D)$ (called the class of measures of finite energy integral) if there exists $c>0$ such that
\[
\int_D|\tilde u|\,d|\mu|\le c\sqrt{\EE_D(u,u)},\quad u\in D(\EE_D).
\]
By \cite[Theorem 2.2.2]{FOT}, for any positive $\mu\in S_0(D)$ and $\eta\in D(\EE_D)$,
\begin{equation}
\label{eq.smfe1}
\EE_D(R^D\mu,\eta)=\int_D\tilde\eta \,d\mu.
\end{equation}

\subsection{Probabilistic potential theory}
\label{sec2.4}

Recall that a bounded subset $D$ of $\BR^d$ is called {\em Dirichlet
regular} (with respect to the fractional Laplacian) if for every $x\in \BR^d\setminus D$,
\[
P_x(\tau_D>0)=0.
\]
By \cite[Example VII.3.4.3]{BH}, each bounded Lipschitz domain is Dirichlet regular. From
this and  \cite{Chung} it follows that $(P^D_t)_{t\ge 0}$ is doubly Feller.
This means that it is strongly Feller, i.e. $P^D_tf\in C_b(D)$ for
every $f\in \mathcal B_b(D)$, and it is Fellerian, i.e. $P^D_t
f\in C_0(D)$ for every $f\in C_0(D)$.

By \cite[Theorem 5.1.4]{FOT}, there is  a one-to-one correspondence between nonnegative $\EE_D$-smooth measures and
positive continuous additive functionals (PCAFs) of $\mathbb X^D$ (see \cite[Section 5.1]{FOT}). By $A^\mu$ we denote the unique PCAF of $\mathbb X^D$ associated with $\mu\in \MM_0(D)$. By \cite[Theorem 5.1.3]{FOT} and (\ref{eq2.4.2abcd}),
\begin{equation}
\label{eq2.4.4}
E^D_x\int_0^{\infty}f(X_r)\,dA^\mu_r=E_x\int_0^{\tau_D}f(X_r)\,dA^\mu_r=\int_DG_D(x,y)f(y)\,\mu(dy)=R^D(f\cdot \mu)(x)
\end{equation}
for q.e. $x\in D$. For a signed measure  $\mu\in \MM_0(D)$ having a decomopsition $\mu=\mu^+-\mu^-$ into a positive and negative part, we  set $A^\mu= A^{\mu^+}-A^{\mu^-}$. Note also that if $\mu\in\MM_{0,b}^+(D)$ and $\sup_{x\in D}R^D\mu<\infty$, then there exists a strict PCAF $A^\mu$ of $\mathbb X^D$ such that (\ref{eq2.4.4}) holds for every $x\in D$ (see \cite[Theorem 5.1.6]{FOT}).

Recall that a c\`adl\`ag process $M$ adapted to $(\FF_t)_{t\ge 0}$ is called a martingale additive functional (MAF) of 
$\mathbb X^D$ iff $M_0=0$, $M$ is an additive functional, $E^D_x|M_t|<\infty$ q.e. $t\ge 0$, and $E^D_xM_t=0$ q.e. $t\ge 0$.
A MAF $M$ of $\mathbb X^D$ is called uniformly integrable iff for q.e. $x\in D$, and every stopping time $\alpha\le\tau_D$,
\[
E^D_x|M_\alpha|<\infty,\qquad E^D_x(M_{\tau_D}|\FF_\alpha)=M_\alpha,\quad P^D_x\mbox{-a.s.}
\] 

The following  very useful result will be frequently used in the sequel.
\begin{lemma}
\label{lm2.4.1}
Assume that  $u\in\BB(D)$ and $\mu\in\mathbb M_0(D)$.
Then
\begin{equation}
\label{eq2.4.5}
u(x)=R^D\mu(x)
\end{equation}
for q.e. $x\in D$ if and only if there exists a uniformly integrable MAF $M$ of $\mathbb X^D$ such that  for q.e. $x\in D$,
\begin{equation}
\label{eq2.4.6}
u(X_t)=\int_t^{\tau_D}\,dA^\mu_r-\int_t^{\tau_D}\,dM_r,\quad t\in [0,\tau_D],\quad P_x\mbox{-a.s.,}
\end{equation}
Moreover, if $\sup_{x\in D}R^D|\mu|(x)<\infty$, then "for q.e. $x\in D$" may be replaced by "for every $x\in D$" with $A^\mu$ being a strict PCAF of $\mathbb X^D$.
\end{lemma}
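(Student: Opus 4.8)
The plan is to prove both implications by exhibiting the correct additive-functional decomposition of the process $t\mapsto u(X_t)$ and exploiting the potential identity (\ref{eq2.4.4}). Write $\mu=\mu^+-\mu^-$, so $A^\mu=A^{\mu^+}-A^{\mu^-}$. Since $\mu\in\mathbb M_0(D)$ gives $R^D\mu^\pm<\infty$ q.e., taking $f\equiv1$ in (\ref{eq2.4.4}) yields $E^D_xA^{\mu^\pm}_{\tau_D}=R^D\mu^\pm(x)<\infty$ for q.e.\ $x$; in particular $A^\mu_{\tau_D}$ is $P^D_x$-integrable and $R^D\mu(x)=E^D_xA^\mu_{\tau_D}$ q.e.

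For necessity ($\Rightarrow$), assuming $u=R^D\mu$ q.e., the key step is to define the candidate MAF directly on path space by
\[
M_t:=u(X_t)-u(X_0)+A^\mu_t ,
\]
which is manifestly independent of $x$. First I would check that $M$ is an additive functional: using $X_0\circ\theta_t=X_t$, $X_s\circ\theta_t=X_{s+t}$ and the additivity $A^\mu_s\circ\theta_t=A^\mu_{s+t}-A^\mu_t$, the relation $M_{t+s}=M_t+M_s\circ\theta_t$ reduces to an algebraic cancellation. The crucial point is the martingale property of $u(X_t)+A^\mu_t$: by the Markov property and additivity it suffices to show $P^D_su(y)+E^D_yA^\mu_s=u(y)$ for q.e.\ $y$, and this follows by decomposing $A^\mu_{\tau_D}=A^\mu_s+A^\mu_{\tau_D}\circ\theta_s$, taking $E^D_y$, and applying the Markov property together with $E^D_{X_s}A^\mu_{\tau_D}=u(X_s)$ and $u(y)=E^D_yA^\mu_{\tau_D}$. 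Integrability of $M_t$ holds q.e.\ because $E^D_xA^{\mu^\pm}_t\le E^D_xA^{\mu^\pm}_{\tau_D}<\infty$ and, $R^D|\mu|$ being excessive, $P^D_t|u|\le P^D_tR^D|\mu|\le R^D|\mu|<\infty$ q.e. Since on $[0,\tau_D]$ the process $u(X_t)+A^\mu_t$ coincides with the closed (hence uniformly integrable) martingale $E^D_x[A^\mu_{\tau_D}\mid\FF_t]$, the functional $M$ is uniformly integrable and optional stopping gives $E^D_x(M_{\tau_D}\mid\FF_\alpha)=M_\alpha$ for $\alpha\le\tau_D$. Finally, using $u(X_{\tau_D})=u(\partial)=0$ one computes
\[
(A^\mu_{\tau_D}-A^\mu_t)-(M_{\tau_D}-M_t)=u(X_t),
\]
which is exactly (\ref{eq2.4.6}).

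For sufficiency ($\Leftarrow$), I would evaluate (\ref{eq2.4.6}) at $t=0$ to obtain $u(x)=A^\mu_{\tau_D}-M_{\tau_D}$ $P_x$-a.s., take $E^D_x$ of both sides, use $E^D_xM_{\tau_D}=0$ (the case $\alpha=0$ of uniform integrability, since $M_0=0$), and conclude $u(x)=E^D_xA^\mu_{\tau_D}=R^D\mu(x)$ q.e.\ by (\ref{eq2.4.4}). For the ``moreover'' assertion, the hypothesis $\sup_{x\in D}R^D|\mu|<\infty$ makes (\ref{eq2.4.4}) valid for \emph{every} $x\in D$ with $A^\mu$ a strict PCAF (\cite[Theorem 5.1.6]{FOT}), so every occurrence of ``q.e.'' in both arguments upgrades to ``for every $x\in D$''.

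The main obstacle I expect is the martingale property of $u(X_t)+A^\mu_t$ together with the integrability and uniform-integrability bookkeeping: the delicate point is to define $M$ path-wise, so that it is a genuine $x$-independent additive functional, while simultaneously identifying it on $[0,\tau_D]$ with the closed martingale generated by $A^\mu_{\tau_D}$. It is precisely here that the hypothesis $\mu\in\mathbb M_0(D)$, i.e.\ the finiteness of $R^D|\mu|$, is indispensable.
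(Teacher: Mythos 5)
Your proposal is correct and follows essentially the same route as the paper: the same pathwise definition $M_t=u(X_t)-u(X_0)+A^\mu_t$, the same Markov-property computation identifying $u(X_t)+A^\mu_t$ with the closed martingale $E^D_x\big(A^\mu_{\tau_D}\,\big|\,\FF_t\big)$ on $[0,\tau_D]$, and the same take-expectations-at-$t=0$ argument for the converse. The only points you gloss over are routine and are precisely what the paper's citations supply: before writing $u(X_s)=E^D_{X_s}A^\mu_{\tau_D}$ one must enlarge the exceptional set $N$ to a properly exceptional one so that $P^D_x(X_t\notin N,\ t\ge 0)=1$ (\cite[Theorem 4.1.1]{FOT}), and one must invoke the quasi-continuity of $u$ together with \cite[Theorem 4.6.1]{FOT} (or \cite[Theorem III.5.7]{BG} in the strict case $\sup_{x\in D}R^D|\mu|<\infty$) to conclude that $M$ is c\`adl\`ag --- which the paper's definition of a MAF requires, and which is also needed to pass from the identity \eqref{eq2.4.6} holding a.s.\ for each fixed $t$ to its holding a.s.\ simultaneously for all $t\in[0,\tau_D]$.
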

\begin{proof}
First suppose that (\ref{eq2.4.6}) is satisfied. Taking the
expectation with respect to $P_x$ of both sides of (\ref{eq2.4.6})
with $t=0$, and then using (\ref{eq2.4.4}) with $f=1$ we get
(\ref{eq2.4.5}). Now suppose that  (\ref{eq2.4.5}) holds. Let
$N$
be an exceptional set such that (\ref{eq2.4.5}) holds for $x\in
D\setminus N$.  By
\cite[Theorem 4.1.1]{FOT}, we may assume that $P^D_x(X_t\notin
 N,\, t\ge 0)=1$ for every $x\in D\setminus N$.
Hence, by (\ref{eq2.4.5}), additivity of $A^\mu$, and strong  Markov property, for every $x\in D\setminus N$ and every stopping time $\sigma\le\tau_D$, we
have
\begin{equation}
\label{eq2.4.7}
u(X_\sigma)=E^D_{X_\sigma}\int_0^{\infty}\,dA^\mu_r=E^D_x\Big(\int_\sigma^{\infty}\,dA^\mu_r\big|
\FF_\sigma\Big)=E^D_x\Big(\int_0^{\infty}\,dA^\mu_r\big|
\FF_\sigma\Big)-A^\mu_\sigma \quad
P^D_{x}\mbox{-a.s.}
\end{equation}
Set
\begin{equation}
\label{eq3r.4}
M_t=u(X_t)-u(X_0)+A^\mu_t,\quad t\ge 0.
\end{equation}
By quasi-continuity of $u$
and \cite[Theorem 4.6.1]{FOT}, $M$ is a c\`adl\`ag process.
Clearly, $M$ is an additive functional and $M_0=0$.
By (\ref{eq2.4.7}), 
$M$ is a uniformly integrable martingale on $[0,\tau_D]$ under measure $P^D_x$.
Equivalently, by (\ref{eq2.4.2abc}), $M$ is a uniformly integrable martingale on $[0,\tau_D]$ under measure $P_x$
for $x\in D\setminus N$.
If $\sup_{x\in D}R^D|\mu|<\infty$, then the above argument holds true with $N=\emptyset$ with one exception that instead of \cite[Theorem 4.6.1]{FOT} one need to apply \cite[Theorem III.5.7]{BG}.
\end{proof}

\subsection{Feynman-Kac formula}
\label{sec2.5}

For a nonnegative measure $\mu\in\MM_{0,b}(D)$ such that $\sup_{x\in D}R^D\mu(x)<\infty$, we define  the perturbation of the form $(\EE_D,D(\EE_D))$ by $\mu$ as follows:
\[
D(\EE^\mu_D)=\{u\in D(\EE_D):\int_D\tilde u^2\,d\mu<\infty\},\qquad \EE^\mu_D(u,v)=\EE_D(u,v)+\int_D\tilde u\tilde v\,d\mu.
\]
By \cite[Theorem 6.1.2]{FOT}, $(\EE^\mu_D,D(\EE^\mu_D))$ is a regular symmetric Dirichlet form on $L^2(D;m)$.
Let $(T^{D,\mu}_t)_{t>0}$ be the semigroup generated by $\EE^\mu_D$ and $A^\mu_D$ be its generator. 
We let
\[
-(\Delta^{\alpha/2})_{|D}+\nu:= -A^\nu_D.
\]
By \cite{FOT} there exists a Hunt process $\mathbb X^{D,\mu}=(X,(P^{D,\mu}_x)_{x\in D\cup\{\partial\}},(\FF_t)_{t\ge0})$
on $D\cup\{\partial\}$ such that
\begin{equation}
\label{eq2.4.1ab-fk}
T^{D,\mu}_tf(x)=E^{D,\mu}_x f(X_t):=\int_{\mathcal D}f(X_t(\omega))\,dP^{D,\mu}_x(\omega),\quad x\in D,\, t\ge 0,\quad m\mbox{-a.e.}
\end{equation}
We denote by  $G^\mu_D$ the  Green function for $-(\Delta^{\alpha/2})_{|D}+\nu$ (see \cite[Exercise 6.1.1]{FOT}). By  \cite[Theorem 6.1.1]{FOT},
we have the Feynman-Kac representation formula
\begin{equation}
\label{ab-fk_1}
E^{D,\mu}_x f(X_t)=E_xe^{-A^\mu_t}f(X_t)\mathbf{1}_{\{t<\tau_D\}},\quad x\in D,\, t\ge 0.
\end{equation}

\section{Integral supersolutions of the obstacle problem}

Let $h:D\rightarrow \mathbb R\cup\{\infty\}$ be a  measurable
strictly positive  function  and $f:\mathbb R\rightarrow \mathbb
R$ be a continuous function.
Consider the following obstacle problem:
\begin{equation}
\label{eq3.1} \left\{
\begin{array}{l}\max\big\{-(\Delta^{\alpha/2})_{|D} u-f(u),u- h\big\}=0,\medskip\\
u>0\quad \mbox{on}\quad D.
\end{array}
\right.
\end{equation}
In this section, we give a definition of an {\em integral solution}  to (\ref{eq3.1}).
We also recall widely used in the literature definition of {\em weak solutions} via variational inequalities.
We show that both definitions are equivalent under regularity assumptions on $u$. 
However, we prefer to work on  integral solutions.
The advantage of this notion is that its formulation  does not require  any regularity from  solutions (merely integrability),
and it allows us introduce the notion of super and subsolutions. Moreover, by using integral solutions, we may apply
probabilistic potential theory, which is very useful in many reasonings. 
We then show  that minimum
of two {\em integral supersolutions} of (\ref{eq3.1}) is again an integral  supersolution.
This  property will be one of the crucial ingredients in the proof of
uniqueness  of (\ref{eq1.1}).

\begin{definition}
\label{df3.1} We say that a quasi-continuous function $u\in
L^1(D;m)$  is an {\em  integral solution} to (\ref{eq3.1}) if there exists a
nonnegative $\nu\in \mathbb M_{0}(D)$ (we call it the reaction
measure for $u$) such that
\begin{enumerate}
\item[\rm{(a)}] $0<u\le h$ $m$-a.e. and  $R^D|f(u)|<\infty$ q.e.,
\item[\rm{(b)}] For q.e. $x\in D$,
\[
u(x)=R^Df(u)(x)-R^D\nu(x),
\]
\item[\rm{(c)}] For any quasi-continuous function $\eta$ on $D$ 
such that $u\le \eta\le h$ $m$-a.e.,
\[
\int_D(\eta-u)\,d\nu= 0.
\]
\end{enumerate}
\end{definition}

\begin{remark}
\label{rm3.1} (i) Take any  quasi-continuous function $\hat h$ such that $u\le \hat h\le
h$ $m$-a.e. Observe that  $u$ is an  integral solution to (\ref{eq3.1}) if and only
if $u$ is an  integral solution to (\ref{eq3.1}) with $h$ replaced by $\hat
h$.
\smallskip\\
(ii) If $h\in D(\EE_D)$ is quasi-continuous, then $u$ is an integral solution to
(\ref{eq3.1}) if and only if  (a), (b) are satisfied  and
$\int_D(h-u)\,d\nu=0$. \smallskip\\
(iii) Recall that by \cite[Lemma 2.1.4]{FOT}, if $u\le \eta,\, m$-a.e. and $u,\eta$ are quasi-continuous, then $u\le \eta$  q.e. \smallskip\\
(iv) Note that by \cite[Theorem 4.6.1,Theorem A.2.7]{FOT}, functions $R^Df^+(u)$, $R^Df^-(u)$, $R^D\nu$ are quasi-continuous as all the mentioned functions are excessive and finite q.e. So, if (b) holds $m$-a.e., then by (iii),
(b) holds q.e. Clearly, $R^Df^+(u), R^Df^-(u)$ does not depend on the version of $u$. Therefore, if we assume in the Definition \ref{eq3.1} that $u$ is merely a measurable function such that (a), (c) hold, and (b) holds $m$-a.e., then there exists a version $\tilde u$ of $u$ such that $\tilde u$ is quasi-continuous and (b) holds q.e. with $u$ replaced by $\tilde u$: this version is given by $\tilde u(x):= R^Df(u)(x)-R^D\nu(x)$ if  $R^D|f|(u)(x)+R^D\nu(x)<\infty$ and zero otherwise. Clearly, $\tilde u= R^Df(\tilde u)-R^D\nu$ q.e.

\end{remark}

Recall that $(\cdot,\cdot)$ stands for the usual inner product in $L^2(D;m)$.

\begin{definition}
\label{df3.2} We say that a quasi-continuous function $u\in D(\EE_D)$  is a {\em   weak solution} to (\ref{eq3.1}) 
if 
\begin{enumerate}
\item[\rm{(a)}] $0<u\le h$ $m$-a.e. and $f(u)\in L^2(D;m)$,
\item[\rm{(b)}] For every $\eta\in D(\EE_D)$ 
such that $\eta\le h$ $m$-a.e.,
\[
\EE_D(u,\eta-u)\ge (f(u),\eta-u).
\]
\end{enumerate}
\end{definition}

\begin{proposition}
\label{prop4.4ext1}
Let $u\in D(\EE_D)$ and $f(u)\in L^2(D;m)$. Then   $u$ is an integral solution to {\rm{(\ref{eq3.1})}} iff $u$ is a weak  solution to {\rm{(\ref{eq3.1})}}. 
\end{proposition}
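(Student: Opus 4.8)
The plan is to prove the two implications separately, using the potential-theoretic machinery assembled in Section~\ref{sec2}, and in particular Lemma~\ref{lm2.4.1}, to translate between the integral formulation (involving $R^D$) and the energy-form formulation (involving $\EE_D$). Throughout I will use that $f(u)\in L^2(D;m)$ and $u\in D(\EE_D)$ are assumed, so that the various objects below all make sense.

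I would begin with the direction \emph{integral $\Rightarrow$ weak}. Suppose $u$ is an integral solution with reaction measure $\nu$. Condition (b) says $u=R^Df(u)-R^D\nu$ q.e.; equivalently $u+R^D\nu=R^Df(u)$, i.e. $u$ is, up to the potential of $\nu$, the $0$-resolvent applied to $f(u)$. Since $f(u)\in L^2(D;m)$, we have $R^Df(u)=J^Df(u)=(-(\Delta^{\alpha/2})_{|D})^{-1}f(u)$ $m$-a.e., so $R^Df(u)\in D(\EE_D)$ and, for every $\eta\in D(\EE_D)$,
\[
\EE_D(R^Df(u),\eta)=(f(u),\eta).
\]
Because $u\in D(\EE_D)$ is assumed, it follows that $R^D\nu=R^Df(u)-u\in D(\EE_D)$ as well, so $\nu\in S_0(D)$ and by~(\ref{eq.smfe1}) we get $\EE_D(R^D\nu,\eta)=\int_D\tilde\eta\,d\nu$ for all $\eta\in D(\EE_D)$. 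Combining the two identities yields the fundamental relation
\[
\EE_D(u,\eta)=(f(u),\eta)-\int_D\tilde\eta\,d\nu,\qquad \eta\in D(\EE_D).
\]
Now fix $\eta\in D(\EE_D)$ with $\eta\le h$ $m$-a.e. and test against $\eta-u$. The $\EE_D$- and $(f(u),\cdot)$-terms combine into $\EE_D(u,\eta-u)-(f(u),\eta-u)=-\int_D(\widetilde{\eta-u})\,d\nu=-\int_D(\tilde\eta-\tilde u)\,d\nu$. The minimality condition (c), applied here with the truncation $\tilde u\le(\tilde u\vee\tilde\eta)\wedge h\le h$ (to stay between $u$ and $h$), together with nonnegativity of $\nu$ and $\eta\le h$, forces $\int_D(\tilde\eta-\tilde u)\,d\nu\le 0$; hence $\EE_D(u,\eta-u)\ge(f(u),\eta-u)$, which is exactly Definition~\ref{df3.2}(b). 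This gives the weak solution.

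For the converse \emph{weak $\Rightarrow$ integral}, I would recover the measure $\nu$ from the variational inequality. The standard move is to define the linear functional $\eta\mapsto(f(u),\eta)-\EE_D(u,\eta)$ on $D(\EE_D)$ and show it is represented by a nonnegative smooth measure. Testing Definition~\ref{df3.2}(b) with $\eta=u\pm\varphi$ for $\varphi\in C_c^\infty(D)$, $\varphi\le 0$, shows this functional is nonnegative on nonpositive test functions, hence a nonnegative distribution, i.e. a nonnegative Radon measure $\nu$ on $D$; boundedness of $u$ in energy gives that $\nu\in S_0(D)\subset\MM_{0,b}(D)\subset\mathbb M_0(D)$ via~(\ref{eq2.3.1}). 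Then~(\ref{eq.smfe1}) yields $\EE_D(u,\eta)=(f(u),\eta)-\int_D\tilde\eta\,d\nu$ for all $\eta\in D(\EE_D)$, and comparing with the characterization of $J^D$ gives $u=R^Df(u)-R^D\nu$ q.e., which is~(b); condition~(a) is immediate. The minimality condition~(c) is the delicate point: one must show that for any quasi-continuous $\eta$ with $u\le\eta\le h$, $\int_D(\eta-u)\,d\nu=0$. Since $\nu$ is supported where $u=h$ (the contact set), I would argue that $\eta=h=u$ $\nu$-a.e. on $\mathrm{supp}\,\nu$; making this rigorous requires showing $\nu$ charges only the contact set $\{u=h\}$, which follows from testing the inequality in both directions with small perturbations that keep $\eta\le h$ and using~(\ref{eq.smfe1}).

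The main obstacle I expect is precisely establishing the minimality condition~(c) and the complementarity ``$\nu$ lives on the contact set'' in the weak-to-integral direction, because the variational inequality only directly controls test functions $\eta$ lying in $D(\EE_D)$ and satisfying $\eta\le h$, whereas (c) quantifies over arbitrary quasi-continuous $\eta$ squeezed between $u$ and $h$. Bridging this gap needs a careful approximation/truncation argument (replacing a general $\eta$ by $(\eta\wedge h)\vee u$ and by suitable $D(\EE_D)$-approximants) together with Remark~\ref{rm3.1}(iii) to pass from $m$-a.e. to q.e. inequalities so that the $\nu$-integrals are well defined. The forward direction is comparatively routine once the representation identity~(\ref{eq.smfe1}) for $\nu$ is in hand.
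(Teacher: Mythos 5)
Your forward implication (integral $\Rightarrow$ weak) is essentially the paper's own argument: since $f(u)\in L^2(D;m)$, $R^Df(u)\in D((\Delta^{\alpha/2})_{|D})\subset D(\EE_D)$, hence $R^D\nu=R^Df(u)-u\in D(\EE_D)$, so $\nu\in S_0(D)$, and then (\ref{eq.smfe1}) together with condition (c) of Definition \ref{df3.1} applied to $\tilde u\vee\tilde\eta$ gives $\EE_D(u,\eta-u)-(f(u),\eta-u)=-\int_D(\tilde\eta-\tilde u)\,d\nu\ge 0$. One cosmetic remark: your competitor $(\tilde u\vee\tilde\eta)\wedge h$ is problematic because $h$ is merely measurable, so truncating by $h$ may destroy quasi-continuity; but the truncation is unnecessary, since $\tilde u\vee\tilde\eta\le h$ $m$-a.e. holds automatically. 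This half is correct.

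The converse, however, has a genuine gap exactly where you flag it, and the repair you gesture at would not work. The variational inequality yields only the one-sided information $\int_D(\tilde\eta-\tilde u)\,d\nu\le 0$ for competitors $\eta\in D(\EE_D)$ with $\eta\le h$ $m$-a.e.; there is no ``testing in both directions,'' because the constraint set is one-sided. Meanwhile condition (c) quantifies over \emph{all} quasi-continuous $\eta$ with $u\le\eta\le h$, and in the intended application $h=\mathbb I_{D\setminus\overline D_0}$ takes the value $+\infty$, so such $\eta$ may be unbounded, of infinite energy, and not approximable within the constraint set; moreover the slogan ``$\nu$ is supported on $\{u=h\}$'' is not even well formulated for merely measurable $h$. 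Closing this gap is precisely the nontrivial content that the paper outsources: it solves the penalized problems $\EE_D(u_n,\eta)=(f(u),\eta)-(n(u_n-h)^+,\eta)$, uses Lions' theorem to get $u_n\rightarrow u$ weakly in $\EE_D$ (uniqueness of the weak solution), and invokes \cite[Theorem 3.8]{K:SM2} to get $u_n\searrow w$ with $w$ an integral solution of the frozen problem (\ref{eq3.1ext}); then $u=w$ $m$-a.e. and Remark \ref{rm3.1}(iv) concludes. The point of the penalization is that for $\nu_n=n(u_n-h)^+\cdot m$ the minimality is transparent ($\nu_n$ is carried by $\{u_n>h\}$, where $\eta-u_n\le h-u_n<0$) and passes to the monotone limit; your Riesz-representation route, to be completed, would essentially have to reprove this part of \cite[Theorem 3.8]{K:SM2}. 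A further small slip: $S_0(D)\not\subset\MM_{0,b}(D)$ in general (constants need not belong to $D(\EE_D)$), and boundedness of $\nu$ is neither automatic nor needed; what Definition \ref{df3.1} requires is $\nu\in\mathbb M_0(D)$, which you do get since $R^D\nu=R^Df(u)-u$ is finite q.e.
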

\begin{proof}
Let $u$ be an integral solution to (\ref{eq3.1}). Since $f(u)\in L^2(D;m)$, $R^Df(u)\in D((\Delta^{\alpha/2})_{|D})\subset D(\EE_D)$.
Thus, $\nu\in S_0(D)$. Therefore, by (\ref{eq.smfe1}) and (c) of Definition \ref{df3.1}, for every $\eta\in D(\EE_D)$ such that $\eta\le h\, m$-a.e.,
\[
\EE_D(u,\eta-u)=\EE_D(R^Df(u)-R^D\nu,\eta-u)=(f(u),\eta-u)-\int_D(\tilde\eta-\tilde u)\,d\nu\ge (f(u),\eta-u).
\] 
Now, let  $u$ be a weak  solution to (\ref{eq3.1}). By \cite[Theorem 5.2, Chapter 3]{Lions} $u_n\rightarrow u$ weakly in $\EE_D$, where $u_n\in D(\EE_D)$
is a unique  solution to the following variational equality
\[
\EE_D(u_n,\eta)=(f(u),\eta)-(n(u_n-h)^+,\eta),\quad \eta\in D(\EE_D).
\]
Clearly, $u_n=R^Df(u)-R^D\nu_n$, $m$-a.e., where $\nu_n=n(u_n-h)^+\cdot m$. By \cite[Theorem 3.8]{K:SM2}, $u_n\searrow w$, where $w$ is an integral solution to 
\begin{equation}
\label{eq3.1ext} 
\max\big\{-(\Delta^{\alpha/2})_{|D} w-f(u),w- h\big\}=0.
\end{equation}
Since $u=w,\, m$-a.e., we get the desired result (cf. Remark \ref{rm3.1}(iv)).
\end{proof}


\begin{definition}
\label{def3.2}
We say that a quasi-continuous function $u\in L^1(D;m)$ is an {\em integral  supersolution} (resp. {\em integral subsolution}) to (\ref{eq3.1}) if there exists a nonnegative measure $\nu\in \mathbb M_{0}(D)$ and a nonnegative  (resp. nonpositive) measure $\mu\in\mathbb M_{0}(D)$  such that conditions
(a) and (c) of Definition \ref{df3.1} are satisfied, and  moreover, for q.e. $x\in \BR^d$,
\[
u(x)=R^Df(u)(x)+R^D\mu(x)-R^D\nu(x).
\]
\end{definition}

\begin{proposition}
\label{prop3.1}
If $u$ is an integral supersolution and an integral subsolution to {\rm{(\ref{eq3.1})}}, then $u$ is an integral solution to \rm{(\ref{eq3.1})}.
\end{proposition}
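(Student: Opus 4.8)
The plan is to read off the two integral representations of $u$ and show that the auxiliary measures collapse into a single reaction measure. Since $u$ is an integral supersolution there are nonnegative $\nu_1,\mu_1\in\mathbb M_{0}(D)$ such that conditions (a), (c) of Definition~\ref{df3.1} hold for $\nu_1$ and, for q.e.\ $x\in D$, $u(x)=R^Df(u)(x)+R^D\mu_1(x)-R^D\nu_1(x)$; since $u$ is also an integral subsolution there are a nonnegative $\nu_2$ and a \emph{nonpositive} $\mu_2$ in $\mathbb M_{0}(D)$ with the analogous representation and with (a), (c) holding for $\nu_2$. I would set $\nu:=\nu_2-\mu_2$. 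As $\nu_2\ge0$ and $-\mu_2\ge0$, this is a nonnegative measure, and since $R^D\nu_2$ and $R^D|\mu_2|$ are finite q.e.\ we have $R^D\nu=R^D\nu_2+R^D|\mu_2|<\infty$ q.e., so $\nu\in\mathbb M_{0}(D)$. Substituting $\nu$ into the subsolution representation gives $u=R^Df(u)-R^D\nu$ q.e., which is exactly condition (b); condition (a) is inherited unchanged from either representation.

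Next I would pin down $\nu$ from the supersolution side. Rearranging that representation yields $R^D\nu_1-R^D\mu_1=R^Df(u)-u=R^D\nu$ q.e., hence $R^D(\nu+\mu_1)=R^D\nu_1$ q.e., an identity between Green potentials of two nonnegative measures of $\mathbb M_{0}(D)$ with finite potentials. The crucial point is to upgrade this to equality of the measures themselves: by the injectivity of the map $\lambda\mapsto R^D\lambda$ on nonnegative smooth measures (which is the one-to-one Revuz correspondence, \cite[Theorem~5.1.4]{FOT}, together with the uniqueness furnished by Lemma~\ref{lm2.4.1}), I obtain $\nu+\mu_1=\nu_1$ as measures. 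To verify the minimality condition (c) for $\nu$, fix a quasi-continuous $\eta$ with $u\le\eta\le h$ $m$-a.e.; by Remark~\ref{rm3.1}(iii) we then have $u\le\eta$ q.e., so $\eta-u\ge0$ off an exceptional set, which is not charged by any measure in $\mathbb M_{0}(D)$. Using $\nu_1=\nu+\mu_1$ together with condition (c) for the supersolution,
\[
0=\int_D(\eta-u)\,d\nu_1=\int_D(\eta-u)\,d\nu+\int_D(\eta-u)\,d\mu_1 .
\]
Both integrals on the right are nonnegative (nonnegative integrand, nonnegative measures), so each vanishes; in particular $\int_D(\eta-u)\,d\nu=0$, which is (c). Hence $u$ is an integral solution with reaction measure $\nu$.

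The main obstacle is the injectivity step. The two representations only give equality of potentials \emph{q.e.}, whereas in checking (c) I must manipulate $\nu,\mu_1,\nu_1$ as measures, so everything hinges on deducing $\nu+\mu_1=\nu_1$ from $R^D(\nu+\mu_1)=R^D\nu_1$. I expect this to follow cleanly from the uniqueness in the smooth-measure/PCAF correspondence, but some bookkeeping is needed to guarantee that $R^D|\mu_1|$, $R^D|\mu_2|$ and $R^D|f(u)|$ are finite q.e., so that none of the subtractions produces the indeterminate form $\infty-\infty$. Note that the roles of sub and supersolution are genuinely complementary: the subsolution (through $\mu_2\le0$) is what makes $\nu=\nu_2-\mu_2$ nonnegative and yields (b), while the supersolution (through $\mu_1\ge0$ and its own condition (c)) is what forces the minimality (c) for $\nu$. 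A symmetric route via the decomposition $\nu=\nu_2+|\mu_2|$ and condition (c) for $\nu_2$ gives $\int_D(\eta-u)\,d\nu=\int_D(\eta-u)\,d|\mu_2|$, which combined with the bound above serves as a consistency cross-check on the sign conventions.
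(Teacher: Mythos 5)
Your proof is correct and follows essentially the same route as the paper: both arguments extract from the two representations, via injectivity of $\lambda\mapsto R^D\lambda$ on nonnegative smooth measures with q.e.-finite potentials, the identity $\nu_1=\nu+\mu_1$ for $\nu:=\nu_2-\mu_2$, and then transfer the minimality condition (c) from $\nu_1$ to $\nu$ using that $\eta-u\ge 0$ q.e.\ and that smooth measures do not charge exceptional sets. The only difference is presentational: the paper encodes this identity through explicit Radon--Nikodym densities ($\nu_2=\gamma\cdot\nu_1$, $\mu_1=\alpha(1-\gamma)\cdot\nu_1$, $\mu_2=\beta(1-\gamma)\cdot\nu_1$, reaction measure $(\beta+\alpha\gamma)\cdot\nu_1$), whereas you split $\int_D(\eta-u)\,d\nu_1$ directly into two nonnegative terms --- a slightly leaner bookkeeping of the same argument, and you are in fact more explicit than the paper about justifying the measure-identification step (via the Revuz correspondence and Lemma \ref{lm2.4.1}).
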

\begin{proof}
By assumptions and Definition \ref{def3.2}, there exist nonnegative measures $\nu_1,\nu_2,\mu_1,\mu_2\in\mathbb M_0(D)$    such that
\[
u(x)=R^Df(u)(x)+R^D\mu_1(x)-R^D\nu_1(x),
\]
\[
u(x)=R^Df(u)(x)-R^D\mu_2(x)-R^D\nu_2(x),
\]
and $\int_D(\eta-u)\,d\nu_1=\int_D(\eta-u)\,d\nu_2=0$ for every quasi-continuous $\eta$
on $D$ such that $u\le \eta\le h$ $m$-a.e. Thus,
\[
R^D\nu_1(x)-R^D\nu_2(x)=R^D\mu_1(x)+R^D\mu_2(x)
\]
for q.e. $x\in D$. From this, we conclude that
$\nu_1-\nu_2=\mu_1+\mu_2$.
Therefore,  there exist nonnegative  $\alpha,\beta,\gamma\in\BB(D)$ such that $\alpha+\beta=1, \gamma\le 1$ and
\[
\nu_2=\gamma\cdot\nu_1,\quad \mu_1=\alpha(1-\gamma)\cdot\nu_1,\quad \mu_2=\beta(1-\gamma)\cdot\nu_1.
\]
Consequently,  for q.e $x\in D$,
\[
u(x)=R^Df(u)(x)-R^D((\beta+\alpha\gamma)\cdot\nu_1)(x).
\]
Since $\int_D(\eta-u)(\beta+\alpha\gamma)\,d\nu_1=0$ for every quasi-continuous $\eta$
on $D$ such that $u\le \eta\le h$ $m$-a.e., we see that $u$ is an integral solution to (\ref{eq3.1}).
\end{proof}

\begin{proposition}
\label{prop3.2}
Let $u_1, u_2$ be  integral supersolutions to {\rm{(\ref{eq3.1})}}. Then $u_1\wedge u_2$ is a an integral supersolution to \rm{(\ref{eq3.1})}.
\end{proposition}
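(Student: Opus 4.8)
The plan is to reduce everything to the probabilistic representation of Lemma \ref{lm2.4.1} and then apply the Meyer--It\^o (Tanaka) formula to the minimum of the two associated processes; the key structural fact is that the minimum operation produces an \emph{extra nonnegative} increasing functional, which is precisely what keeps us inside the class of supersolutions. I begin with the routine points. Write $w=u_1\wedge u_2$. Since each $u_i$ is quasi-continuous with $u_i\in L^1(D;m)$ and $0<u_i\le h$ $m$-a.e., the function $w$ is quasi-continuous, $w\in L^1(D;m)$ and $0<w\le h$ $m$-a.e. As $w(x)\in\{u_1(x),u_2(x)\}$ pointwise, continuity of $f$ gives $|f(w)|\le|f(u_1)|+|f(u_2)|$, hence $R^D|f(w)|\le R^D|f(u_1)|+R^D|f(u_2)|<\infty$ q.e., which is condition (a) of Definition \ref{df3.1}.

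Next I set $Y^i_t=u_i(X_t)$ and take the nonnegative measures $\mu_i,\nu_i$ from Definition \ref{def3.2}. Applying Lemma \ref{lm2.4.1} to $f(u_i)\cdot m+\mu_i-\nu_i$ yields uniformly integrable MAFs $M^i$ with $Y^i_{\tau_D}=0$ and
\[
dY^i_t=-f(u_i)(X_t)\,dt-dA^{\mu_i}_t+dA^{\nu_i}_t+dM^i_t,\qquad t\in[0,\tau_D].
\]
Writing $w=\tfrac12(u_1+u_2)-\tfrac12|u_1-u_2|$ and applying the Meyer--It\^o formula to $Z=Y^1-Y^2$, I obtain
\[
d(Y^1_t\wedge Y^2_t)=\mathbf 1_{\{Y^1_{t-}\le Y^2_{t-}\}}\,dY^1_t+\mathbf 1_{\{Y^1_{t-}>Y^2_{t-}\}}\,dY^2_t-\tfrac12\,d\tilde C_t,
\]
where $\tilde C$ is nondecreasing (the local-time and jump-correction term). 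On $\{Y^1\le Y^2\}$ one has $w(X)=u_1(X)$, whence $f(w)(X)=f(u_1)(X)$, and symmetrically, so the absolutely continuous drift is exactly $-f(w)(X_t)\,dt$; the $\nu$-terms assemble into the PCAF of the smooth measure $\nu=\mathbf 1_{\{u_1\le u_2\}}\cdot\nu_1+\mathbf 1_{\{u_1>u_2\}}\cdot\nu_2$, dominated by $\nu_1+\nu_2$.

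The main obstacle is that, since $\mathbb X^D$ is a pure-jump process, $\tilde C$ is \emph{not continuous}, whereas the representation (\ref{eq2.4.6}) forces the finite-variation part of $w(X)$ to be a continuous PCAF. I resolve this by passing to the dual predictable projection: $\tilde C=\tilde C^{p}+(\tilde C-\tilde C^{p})$, where $\tilde C^{p}$ is the continuous increasing compensator (continuity uses quasi-left-continuity of $\mathbb X^D$) and $\tilde C-\tilde C^{p}$ is a purely discontinuous martingale. Absorbing this martingale remainder, together with $\mathbf 1_{\{Y^1_-\le Y^2_-\}}dM^1+\mathbf 1_{\{Y^1_->Y^2_-\}}dM^2$, into one uniformly integrable MAF $M$ leaves the continuous nondecreasing part
\[
dA^\mu_t=\mathbf 1_{\{Y^1_{t-}\le Y^2_{t-}\}}\,dA^{\mu_1}_t+\mathbf 1_{\{Y^1_{t-}>Y^2_{t-}\}}\,dA^{\mu_2}_t+\tfrac12\,d\tilde C^{p}_t.
\]
By the Revuz correspondence $A^\mu$ is the PCAF of a nonnegative smooth measure $\mu$ and $A^\nu$ of $\nu$, and the resulting identity $w(X_t)=\int_t^{\tau_D}dA^{f(w)m+\mu-\nu}_r-\int_t^{\tau_D}dM_r$ matches (\ref{eq2.4.6}); the converse direction of Lemma \ref{lm2.4.1} then gives $w=R^Df(w)+R^D\mu-R^D\nu$ q.e. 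Finiteness $R^D\mu,R^D\nu<\infty$ q.e. (so $\mu,\nu\in\mathbb M_0(D)$) follows from $R^D\mu=w-R^Df(w)+R^D\nu$ and the q.e. finiteness of the potentials attached to $u_1,u_2$.

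It remains to verify the minimality condition (c). Given quasi-continuous $\eta$ with $w\le\eta\le h$ $m$-a.e., set $\eta_1=\eta\vee u_1$; then $u_1\le\eta_1\le h$ $m$-a.e. and $\eta_1$ is quasi-continuous, so condition (c) for $u_1$ yields $\int_D(\eta_1-u_1)\,d\nu_1=0$. Since $\eta_1=\eta$ on $\{u_1\le u_2\}$ and the integrand $\eta_1-u_1\ge0$, this forces $\int_{\{u_1\le u_2\}}(\eta-w)\,d\nu_1=0$; treating $u_2$ symmetrically and summing against the decomposition of $\nu$ gives $\int_D(\eta-w)\,d\nu=0$. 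Hence $w=u_1\wedge u_2$ satisfies (a), (c) and the supersolution representation of Definition \ref{def3.2}, which is the assertion.
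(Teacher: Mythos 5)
Your proof is correct and follows essentially the same route as the paper: the semimartingale representation from Lemma \ref{lm2.4.1}, the Tanaka--Meyer formula for the minimum, replacement of the resulting increasing jump term by its dual predictable projection (continuous because all jumps of $(u_1\wedge u_2)(X)$ and of the martingale parts are totally inaccessible, by quasi-left continuity of the Hunt process), the Revuz correspondence to obtain a nonnegative smooth measure, and the minimality check via the test functions $\eta\vee u_1$, $\eta\vee u_2$. The only differences are cosmetic (you write the minimum through $\frac12(u_1+u_2)-\frac12|u_1-u_2|$ rather than $u_1-(u_1-u_2)^+$, and you verify condition (a) and the q.e.\ finiteness of $R^D\mu$, $R^D\nu$ explicitly, which the paper leaves implicit).
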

\begin{proof}
By  the definition of a an integral supersolution to (\ref{eq3.1}) and Lemma \ref{lm2.4.1}, for q.e. $x\in D$, we have
\[
u_i(X_t)=\int_t^{\tau_D}\,dA^{\mu_i}_r+\int_t^{\tau_D}f(u(X_r))\,dr
-\int_t^{\tau_D}\,dA^{\nu_i}_r-\int_t^{\tau_D}\,dM^i_r,\quad t\in [0,\tau_D],\quad P_x\mbox{-a.s.,}
\]
$i=1,2$, for some  nonnegative $\nu_1,\nu_2,\mu_1,\mu_2\in\mathbb M_0(D)$ and
some uniformly integrable MAFs $M^1,M^2$ of $\mathbb X^D$. By the Tanaka-Meyer formula (see, e.g., \cite[IV.Theorem 70]{Protter}) applied 
to $u_1(X)-(u_1(X)-u_2(X))^+=u_1(X)\wedge u_2(X)$,
there exists an increasing c\`adl\`ag process $C$ with $C_0=0$ such that for q.e. $x\in D$,
\begin{align}
\label{eq4.ext.56}
\nonumber(u_1\wedge u_2)(X_t)&=(u_1\wedge u_2)(x)-C_t\\
&\nonumber\,\,\,-\int_0^t \mathbf{1}_{\{u_1(X_r)>u_2(X_r)\}}f(u_2(X_r))\,dr
-\int_0^t \mathbf{1}_{\{u_1(X_r)>u_2(X_r)\}}\,(dA^{\mu_2}_r-dA^{\nu_2}_r)\\
&\nonumber\,\,\, -\int_0^t \mathbf{1}_{\{u_2(X_r)\ge u_1(X_r)\}}f(u_1(X_r))\,dr-\int_0^t \mathbf{1}_{\{u_2(X_r)\ge u_2(X_r)\}}\,(dA^{\mu_1}_r-dA^{\nu_1}_r)\\
&\nonumber\,\,\,+\int_0^t \mathbf{1}_{\{u_1(X_{r-})>u_2(X_{r-})\}}\,dM^2_r+\int_0^t \mathbf{1}_{\{u_2(X_{r-})\ge u_2(X_{r-})\}}\,dM^1_r\\
&\nonumber=(u_1\wedge u_2)(x)-C_t-\int_0^tf((u_1\wedge u_2)(X_r))\,dr\\
&\nonumber\,\,\,-\int_0^t \mathbf{1}_{\{u_1(X_r)>u_2(X_r)\}}\,dA^{\mu_2}_r+\int_0^t \mathbf{1}_{\{u_2(X_r)\ge u_2(X_r)\}}\,dA^{\mu_1}_r\\
&\nonumber\,\,\,+\int_0^t \mathbf{1}_{\{u_1(X_r)>u_2(X_r)\}}\,dA^{\nu_2}_r+\int_0^t \mathbf{1}_{\{u_2(X_r)\ge u_2(X_r)\}}\,dA^{\nu_1}_r\\
&\,\,\,+\int_0^t \mathbf{1}_{\{u_1(X_{r-)}>u_2(X_{r-})\}}\,dM^2_r+\int_0^t \mathbf{1}_{\{u_2(X_{r-})\ge u_2(X_{r-})\}}\,dM^1_r,
\end{align}
$t\in [0,\tau_D],\,P_x\mbox{-a.s.}$ From the above formula, we get, in particular, that $C$ is a positive  AF of $\mathbb X^D$. 
By \cite[Theorem A.3.16]{FOT} there exists a positive AF  $C^p$ which is the dual predictable projection of $C$ under measure $P^D_x$ for q.e. $x\in D$. 
Since $\mathbb X^D$ is a Hunt process it   is, by the very definition,  quasi-left continuous, so it has only {\em totally inaccessible jumps}. Therefore, since $u_1\wedge u_2$ is quasi-continuous, process $(u_1\wedge u_2)(X)$ has only totally inaccessible jumps
under measure $P^D_x$ for q.e. $x\in D$ (see \cite[Theorem 4.2.2]{FOT}).  Moreover, by \cite[Proposition 2, Proposition 4]{CW} every local $(\FF_t)_{t\ge 0}$-martingale has only totally inaccessible   jumps. By the definition of dual predictable projection, $C^p-C$ is an $(\FF_t)_{t\ge 0}$-martingale. Therefore, by (\ref{eq4.ext.56}), $C^{p}$ has only totally inaccessible  jumps under measure $P^D_x$ for q.e. $x\in D$. However, $C^{p}$ is predictable. Consequently,   $C^{p}$ is continuous. So, $C^p$ is a PCAF of $\mathbb X^D$. Hence
$C^p=A^\beta$ for some  nonnegative $\beta\in \mathbb M_{0}(D)$ (cf. Section \ref{sec2.4}). Define  $\mu= \mathbf{1}_{u_1>u_2}\cdot\mu_2+\mathbf{1}_{u_2\ge u_1}\cdot\mu_1+\beta$ and $\nu= \mathbf{1}_{u_1>u_2}\cdot\nu_2+\mathbf{1}_{u_2\ge u_1}\cdot\nu_1$. By Lemma \ref{lm2.4.1}, for q.e $x\in D$,
\begin{equation}
\label{eq4.ext13}
(u_1\wedge u_2)(x)=R^Df(u_1\wedge u_2)(x)+R^D\mu(x)-R^D\nu(x).
\end{equation}
Let  $\eta$ be a  quasi-continuous function such that $u_1\wedge u_2\le \eta\le h$ $m$-a.e.
Observe that
\begin{align*}
\int_D(\eta-u_1\wedge u_2)\,d\nu&=\int_{u_1>u_2}(\eta-u_2)\,d\nu_2+\int_{u_1\le u_2}(\eta-u_1)\,d\nu_1
\\&\le \int_{u_1>u_2}(\eta\vee u_2-u_2)\,d\nu_2+\int_{u_1\le u_2}(\eta\vee u_1-u_1)\,d\nu_1
\\&\le \int_D(\eta\vee u_2-u_2)\,d\nu_2+\int_D(\eta\vee u_1-u_1)\,d\nu_1.
\end{align*}
Clearly, $u_2\le \eta\vee u_2\le h$ $m$-a.e., and $u_1\le \eta\vee u_1\le h$ $m$-a.e. So, by condition (c) of Definition \ref{df3.1}
applied to $(u_1,\nu_1)$ and $(u_2,\nu_2)$, we get 
\[
\int_D(\eta\vee u_2-u_2)\,d\nu_2+\int_D(\eta\vee u_1-u_1)\,d\nu_1=0.
\]
Thus, $\int_D(\eta-u_1\wedge u_2)\,d\nu=0$. This combined with (\ref{eq4.ext13}) implies that $u_1\wedge u_2$ is an integral supersolution to (\ref{eq3.1}).
\end{proof}

\begin{proposition}
\label{prop3.3}
Assume  that $f$ is  nondecreasing. Let $\underline u$ (resp. $\overline u$) be  a bounded integral subsolution (resp.  supersolution) to {\rm{(\ref{eq3.1})}} and $\underline u\le \overline u$ q.e. Then there exists an integral solution $u$ to {\rm{(\ref{eq3.1})}}
such that $\underline u\le u\le \overline u $ q.e.
\end{proposition}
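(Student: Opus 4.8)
The plan is to construct $u$ by monotone (Picard) iteration started from the supersolution, using a comparison principle for the obstacle problem with a \emph{frozen} reaction term as the engine and the monotonicity of $f$ to force the iterates to decrease.

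First I would set $u_0=\overline u$ and, given $u_n$, define $u_{n+1}$ to be the integral solution of the obstacle problem with the reaction frozen at $f(u_n)$,
\[
\max\big\{-(\Delta^{\alpha/2})_{|D} u_{n+1}-f(u_n),\,u_{n+1}-h\big\}=0 ,
\]
i.e. the (essentially unique) $w$ with $w=R^Df(u_n)-R^D\nu$ for some nonnegative $\nu\in\mathbb M_0(D)$ satisfying the minimality condition (c) of Definition \ref{df3.1}. Existence for a fixed reaction is exactly the linear obstacle problem solved by penalization in the proof of Proposition \ref{prop4.4ext1} (via \cite{K:SM2}); since $\underline u,\overline u$ are bounded and $f$ is continuous, the frozen reactions $f(u_n)$ stay uniformly bounded along the iteration, so all potentials involved are finite, and the positivity required in Definition \ref{df3.1} will be recovered a posteriori from $u_{n+1}\ge\underline u>0$.

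The key ingredient is a comparison principle for this frozen-reaction problem: its integral solution is dominated by every supersolution and dominates every subsolution carrying the same (or pointwise larger, resp. smaller) reaction, and in particular it is monotone in the reaction. I would obtain this exactly as in the proof of Proposition \ref{prop3.2}: writing the probabilistic representations of the two competitors from Lemma \ref{lm2.4.1}, forming their difference, and applying the Tanaka--Meyer argument, so that the minimality (Skorokhod) condition on the reaction measures kills the unfavourable boundary term and yields the ordering. Granting this, the monotonicity of $f$ together with $\underline u\le\overline u$ gives, by induction, $\underline u\le u_{n+1}\le u_n\le\overline u$ q.e.: the anchor $u_1\le\overline u$ holds because $\overline u$ is a supersolution of the frozen problem with reaction $f(\overline u)$, the anchor $\underline u\le u_1$ holds because $\underline u$ is a subsolution of that problem (here $f(\underline u)\le f(\overline u)$ is absorbed into a nonnegative potential), and the monotone step uses $f(u_n)\le f(u_{n-1})$ and $f(\underline u)\le f(u_n)$.

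It then remains to pass to the limit. The iterates decrease, $u_n\searrow u$ with $\underline u\le u\le\overline u$ q.e., so $u>0$ and $u\le h$ $m$-a.e.; by continuity and boundedness of $f$ the frozen reactions converge monotonically, $f(u_n)\searrow f(u)$, whence $R^Df(u_n)\to R^Df(u)$. Using the stability of integral solutions of the obstacle problem under monotone convergence of the reaction (again \cite{K:SM2}), the limit $u$ is the integral solution of the problem with reaction $f(u)$, which is precisely (\ref{eq3.1}); combined with $\underline u\le u\le\overline u$ this is the assertion. I expect this last step to be the main obstacle: one must control the reaction measures $\nu_n$ along the sequence, extract the limit reaction measure $\nu$, and verify that the minimality condition (c) of Definition \ref{df3.1} survives the passage to the limit (that $\nu$ still acts only on the contact set). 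This is the technical heart, and it is exactly here that the hypothesis that $f$ is nondecreasing is indispensable, since it turns the convergence of the reactions into a monotone one and lets the stability result of \cite{K:SM2} apply.
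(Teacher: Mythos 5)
Your proposal runs the paper's own scheme in mirror image. The paper likewise constructs $u$ by Picard iteration on frozen-reaction obstacle problems, invoking \cite[Theorem 3.8]{K:SM2} for existence at each step and \cite[Proposition 3.12]{K:SM2} for comparison of both the iterates and their reaction measures (so your plan to re-derive the comparison principle by the Tanaka--Meyer argument of Proposition \ref{prop3.2} is redundant, though plausible). The one structural difference is the direction: the paper starts from $u_0=\underline u$ and iterates \emph{upward}, you start from $\overline u$ and iterate \emph{downward}. This choice is not cosmetic, because it determines how hard the limit passage is --- exactly the step you flag as ``the technical heart'' and then outsource to an unspecified ``stability under monotone convergence of the reaction'' result in \cite{K:SM2}. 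The paper cites no such theorem; it does the work by hand, and the upward direction is what makes that work short: by \cite[Proposition 3.12]{K:SM2} the reaction measures \emph{increase}, $\nu_n\le\nu_{n+1}$, so $\nu(B):=\lim_n\nu_n(B)$ is defined by monotonicity; the bound $R^D\nu_n\le R^Df(u_{n-1})\le R^D|f(\underline u)|+R^D|f(\overline u)|$ together with \cite[Lemma 5.4]{KR:JFA} gives $\sup_n\|\nu_n\|_{TV}<\infty$; Vitali--Hahn--Saks yields $\int_D g\,d\nu_n\to\int_D g\,d\nu$ for $g\in L^1(D;\nu)$, whence $R^D\nu_n\to R^D\nu$ q.e.\ and the integral equation survives the limit.

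The concrete place where your downward direction costs extra work is condition (c) of Definition \ref{df3.1}. Upward, any test function for the limit problem, $u\le\eta\le h$, is automatically admissible for every approximating problem, since $u_n\le u\le\eta$; hence $0=\int_D(\eta-u_n)\,d\nu_n\ge\int_D(\eta-u)\,d\nu_n\to\int_D(\eta-u)\,d\nu\ge0$ and (c) follows in one line. Downward, $u\le\eta$ does \emph{not} imply $u_n\le\eta$, so the minimality of $(u_n,\nu_n)$ cannot be tested with $\eta$ directly; you must test with $\eta\vee u_n$ (the device already used in the proof of Proposition \ref{prop3.2}), observe $(\eta\vee u_n)-u_n=(\eta-u_n)^+\nearrow\eta-u$, and use that now the $\nu_n$ \emph{decrease}, so $\nu\le\nu_n$ and $\int_D(\eta-u_n)^+\,d\nu\le\int_D(\eta-u_n)^+\,d\nu_n=0$, giving $\int_D(\eta-u)\,d\nu=0$ by monotone convergence (a truncation $\eta\wedge(u+k)$ handles $h=+\infty$). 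Similarly, $R^D\nu_n\to R^D\nu$ is no longer immediate from setwise convergence because $G_D(x,\cdot)$ is unbounded; for decreasing measures it follows from a layer-cake argument, since $\nu_n-\nu$ decreases setwise to zero, is dominated by $\nu_1$, and $G_D(x,\cdot)\in L^1(\nu_1)$ for q.e.\ $x$. With these insertions your route does close, so the approach is salvageable; but as written the decisive step rests on a theorem you have not exhibited, and the paper's choice of iterating up from the subsolution exists precisely so that this step reduces to Vitali--Hahn--Saks and a one-line inequality.
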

\begin{proof}
Let $u_0=\underline u$. We first show  that  for each $n\ge1$ there exists  an integral solution $u_n$ to the problem
\begin{equation}
\label{eq3.2} \left\{
\begin{array}{l}\max\big\{-(\Delta^{\alpha/2})_{|D} v-f(u_{n-1}),v- h\big\}=0,\medskip\\
v>0\quad \mbox{on}\quad D,
\end{array}
\right.
\end{equation}
and 
\[
u_n\le u_{n+1},\mbox{q.e.},\qquad \nu_n\le \nu_{n+1}\qquad \underline u\le u_n\le\overline u,\,\mbox{q.e.}\quad n\ge 1,
\]
where $\nu_n$ is the reaction measure for $u_n$.
Indeed, the existence of $u_1$ follows from \cite[Theorem 3.8]{K:SM2}. By \cite[Proposition 3.12]{K:SM2}, $\underline u\le u_1\le \overline u$ q.e. In particular,
$R^D|f(u_1)|<\infty$ q.e. Hence, by \cite[Theorem 3.8]{K:SM2} again, there exists an integral solution $u_2$ to (\ref{eq3.2}), and  by \cite[Proposition 3.12]{K:SM2} again, $\underline u\le u_2\le  \overline u$ q.e. and  $u_1\le u_2$ q.e. Continuing in this fashion, we get the existence  of $\{u_n\}$ having the desired properties.
Moreover, by \cite[Proposition 3.12]{K:SM2}, $\nu_n\le \nu_{n+1},\, n\ge 1$. Let $u=\sup_{n\ge 1} u_n$, and
\[
\nu(B):=\lim_n\nu_n(B),\quad B\in\BB(D).
\]
Observe that 
\[
R^D\nu_n\le R^Df(u_{n-1})\le R^D|f(\overline u)|+R^D|f(\underline u)|,\quad \mbox{q.e.}
\]
Therefore, by \cite[Lemma 5.4]{KR:JFA},
\[
\|\nu_n\|_{TV}\le \|f(\underline u)\|_{L^1}+\|f(\overline u)\|_{L^1},\quad n\ge 1.
\]
The right-hand side is finite since $\underline u, \overline u$ are bounded and $D$ is bounded.
By the Vitali-Hahn-Saks theorem $\nu$ is a bounded Borel measure and 
\begin{equation}
\label{eq3.ext11}
\int_Df\,d\nu_n\rightarrow \int_Df\,d\nu,\quad f\in L^1(D;\nu).
\end{equation}
By (\ref{eq2.3.1}), $R^D\nu<\infty$ q.e., and $R^D|f(\overline u)|+R^D|f(\underline u)|<\infty$ q.e.
So, by (\ref{eq3.ext11}) and the Lebesgue dominated convergence theorem,
\[
R^D\nu_n\rightarrow R^D\nu,\quad R^Df(u_{n-1})\rightarrow R^Df(u),\quad \mbox{q.e.}
\]
Thus,
\[
u=R^Df(u)-R^D\nu,\quad\mbox{q.e.}
\]
By Remark \ref{rm3.1}(iv), $u$ is quasi-continuous. Clearly, $u\ge h,\, m$-a.e. Let $\eta$ be an arbitrary quasi-continuous function such that $u\le \eta \le h$ $m$-a.e. Then, by the minimality condition (c)
of Definition \ref{df3.1} applied to $(u_n,\nu_n)$, and by  (\ref{eq3.ext11}),
\[
0=\int_D(\eta-u_n)\,d\nu_n\ge \int_D(\eta-u)\,d\nu_n\rightarrow \int_D(\eta-u)\,d\nu\ge 0.
\]
Thus, $u$ is an integral solution to (\ref{eq3.1}).
\end{proof}

\section{Uniqueness result}

As in Sections 2 and 3, we assume that $D$ is a bounded Lipschitz domain in $\BR^d$ ($d\ge 2$) and $D_0\subset D$ is a bounded Dirichlet regular domain. We assume that $a>\lambda_1^D$, where $\lambda_1^D$ is the first eigenvalue for the operator $-(\Delta^{\alpha/2})_{|D}$. By $\varphi_1^D$
we denote the ground state for $-(\Delta^{\alpha/2})_{|D}$, i.e. a unique strictly positive  function $\varphi_1^D\in D(\EE_D)$ such that $\|\varphi_1^D\|_{L^2}=1$ and
\[
\EE_D(\varphi_1^D,\eta)=\lambda_1^D(\varphi_1^D,\eta),\quad \eta\in D(\EE_D).
\]
It is well known that $\lambda_1^D>0$, and by the regularity of $D$, $\varphi_1^D\in C_0(D)$.

To  prove a uniqueness result for (\ref{eq1.1}), we shall need some regularity results for integral solutions to (\ref{eq1.1}), and
the  result  which compare  the  Green function $G_D$ for  $(\Delta^{\alpha/2})_{|D}$ with the Green function $G^\nu_D$
for $(\Delta^{\alpha/2})_{|D}-\nu$.

It is well known (see e.g. \cite{Grzywny}) that the semigroup $(P^D_t)_{t>0}$ is intrinsically ultracontractive, which implies that for every $t>0$
there exist  constants $c_1(t),\, c_2(t)>0$ such that
\begin{equation}
\label{eq4.1}
c_1(t)\varphi_1^D(x)\varphi_1^D(y)\le p_D(t,x,y)\le c_2(t)\varphi_1^D(x)\varphi_1^D(y),\quad x,y\in D.
\end{equation}

\begin{proposition}
\label{prop4.2ms}
Let $u$ be an integral solution to {\rm{(\ref{eq1.1})}} and $\nu$ be the reaction measure for $u$.
Then $u\cdot\nu=\nu$.
\end{proposition}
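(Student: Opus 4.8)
The plan is to deduce everything from the minimality condition (c) of Definition \ref{df3.1}; note that $f(u)=au$ plays no role here, since (c) does not involve $f$. Write $h=\mathbb I_{D\setminus\overline D_0}$ and observe that $u\cdot\nu=\nu$ is equivalent to $\tilde u=1$ $\nu$-a.e. The basic fact I would record is a sharpening of (c): if $\eta$ is any quasi-continuous function with $u\le\eta\le h$ $m$-a.e., then $\eta-\tilde u\ge0$ $m$-a.e., so by Remark \ref{rm3.1}(iii) in fact $\eta\ge\tilde u$ q.e.; since $\nu\in\mathbb M_0(D)$ charges no set of zero capacity, $\eta-\tilde u\ge0$ $\nu$-a.e. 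Hence the equality $\int_D(\eta-\tilde u)\,d\nu=0$ forces $\eta=\tilde u$ $\nu$-a.e. for \emph{every} admissible test function $\eta$. All three bounds below come from choosing $\eta$ suitably.

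For the lower bound I would take $\eta=\tilde u\vee1$. It is quasi-continuous, satisfies $\eta\ge\tilde u$, and $\eta\le h$ $m$-a.e.: on $D\setminus\overline D_0$ we have $h=1$ and $u\le1$ $m$-a.e., so $\tilde u\vee1=1=h$ there, while on $\overline D_0$ we have $h=+\infty$. Thus $\eta$ is admissible and the conclusion $\eta=\tilde u$ $\nu$-a.e. reads $\tilde u\vee1=\tilde u$ $\nu$-a.e., i.e.
\[
\tilde u\ge1\quad\nu\text{-a.e. on }D.
\]

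The upper bound $\tilde u\le1$ $\nu$-a.e. is the substantive part, and I would split it. First, $\nu$ does not charge the open set $D_0$: for $\phi\in C_c(D_0)$ with $\phi\ge0$ the function $\eta=\tilde u+\phi$ is admissible (on $\mathrm{supp}\,\phi\subset\overline D_0$ one only needs $\eta\le+\infty$, and off $\mathrm{supp}\,\phi$ one has $\eta=\tilde u\le h$ $m$-a.e.), so $\int_D\phi\,d\nu=0$; taking $\phi\uparrow$ over compacts exhausting $D_0$ gives $\nu(D_0)=0$. Second, on the open set $D\setminus\overline D_0$ I would upgrade $u\le1$ from $m$-a.e. to q.e.: for a compact $K\subset D\setminus\overline D_0$ pick $\psi\in C_c(D\setminus\overline D_0)$ with $0\le\psi\le1$ and $\psi\equiv1$ on $K$; then $(\tilde u-1)^+\psi$ is quasi-continuous, nonnegative, and vanishes $m$-a.e. (because $u\le1$ $m$-a.e. on $\mathrm{supp}\,\psi$), so Remark \ref{rm3.1}(iii) yields $(\tilde u-1)^+\psi=0$ q.e., whence $\tilde u\le1$ q.e. on $K$; exhausting $D\setminus\overline D_0$ shows $\tilde u\le1$ q.e., hence $\nu$-a.e., on $D\setminus\overline D_0$.

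Combining, $\nu$ is carried by $(D\setminus\overline D_0)\cup\partial D_0$, where $\tilde u\ge1$ $\nu$-a.e. always and $\tilde u\le1$ $\nu$-a.e. on $D\setminus\overline D_0$; this already gives $\tilde u=1$ $\nu$-a.e. off $\partial D_0$. I expect the interface $\partial D_0$ to be the main obstacle: there the barrier is $+\infty$, so neither perturbation above detects it and one cannot exclude $\nu(\partial D_0)>0$ outright. I would close this gap by transferring the bound $\tilde u\le1$ from $D\setminus\overline D_0$ to its closure, using the fine continuity of the quasi-continuous function $\tilde u$ together with the Dirichlet-regularity of $D_0$ (each point of $\partial D_0$ is regular for $\BR^d\setminus D_0$, hence lies in the fine closure of $D\setminus\overline D_0$), which forces $\tilde u\le1$ q.e. on $\partial D_0$ as well. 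With $\tilde u\le1$ $\nu$-a.e. on all of $D\setminus D_0$ and $\nu(D_0)=0$, the two bounds give $\tilde u=1$ $\nu$-a.e., that is $u\cdot\nu=\nu$.
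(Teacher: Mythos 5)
Your first three steps are correct and pleasantly economical: testing the minimality condition (c) of Definition \ref{df3.1} with $\eta=\tilde u\vee 1$ does give $\tilde u\ge 1$ $\nu$-a.e.; the perturbations $\tilde u+\phi$, $\phi\in C_c(D_0)$, $\phi\ge0$, do give $\nu(D_0)=0$; and the quasi-continuity upgrade via Remark \ref{rm3.1}(iii) does give $\tilde u\le 1$ q.e., hence $\nu$-a.e., on $D\setminus\overline D_0$. But the last step --- the only one requiring a new idea, at the interface you correctly identified as the crux --- contains a genuine gap. The claim ``each point of $\partial D_0$ is regular for $\BR^d\setminus D_0$, hence lies in the fine closure of $D\setminus\overline D_0$'' is a non sequitur: Dirichlet regularity of $D_0$ makes $x\in\partial D_0$ regular for $\BR^d\setminus D_0$, a set that contains $\partial D_0$ itself, and this in no way forces $x$ to be finely adherent to the smaller open set $D\setminus\overline D_0$. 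Concretely, take $d=2$, $\alpha\in(1,2)$, $D=B(0,2)$ and the slit disk $D_0=B(0,1)\setminus\big([0,1)\times\{0\}\big)$. For $\alpha>1$ the slit is non-polar and regular at its points, so $D_0$ is Dirichlet regular; yet $x=(1/2,0)\in\partial D_0$ lies at distance $1/2$ from $D\setminus\overline D_0=\{1<|y|<2\}$, and since the fine topology refines the Euclidean topology, $x$ is not in the fine closure of $D\setminus\overline D_0$. On $\overline D_0$ the obstacle is $+\infty$, so $u$ is unconstrained there and may well exceed $1$ on the slit; nothing in your argument then prevents $\nu$ from charging a portion of $\partial D_0$ on which $\tilde u>1$.

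The gap sits exactly where your opening remark ``$f(u)=au$ plays no role'' backfires: the paper closes the interface by using the equation. Applying Dynkin's formula to $u=aR^Du-R^D\nu$ over the exit time $\tau_{D_0}$, it constructs the quasi-continuous majorant $h=1+a\rho$, with $\rho(x)=E_x\int_0^{\tau_{D_0}}u(X_r)\,dr$, satisfying $u\le h$ q.e., $h>1$ on $D_0$ (indeed $\rho(x)=\int_DG_{D_0}(x,y)u(y)\,dy>0$ there), and --- this is the decisive point --- $h\equiv1$ on all of $D\setminus D_0$, \emph{including} $\partial D_0$, because Dirichlet regularity gives $\tau_{D_0}=0$ off $D_0$. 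Testing minimality with the powers $h^n$ (admissible since $u\le h^n\le\mathbb I_{D\setminus\overline D_0}$) yields $\nu(D_0)=0$ as $h^n\nearrow\infty$ on $D_0$, and then testing with $u+\eta(h-u)$, $\eta\in C_c(D)$ nonnegative, yields $\int_D\eta(h-u)\,d\nu=\int_D\eta(1-u)\,d\nu=0$, i.e. $u=1$ $\nu$-a.e. on the whole carrier $D\setminus D_0$, interface included. In other words, the equation-based bound $u\le h$ with $h=1$ on $\partial D_0$ is precisely the substitute for your unavailable fine-closure transfer; unless you can establish $\tilde u\le1$ $\nu$-a.e. on $\partial D_0$ by some other route, an input of this kind from the equation is indispensable, and your purely variational argument cannot be completed as proposed.
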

\begin{proof}
Set $w=R^Du,\, w_n=R^D(u\wedge n)$, and
\begin{equation}
\label{eq4.1.0}
h(x)=1+a\rho(x),\quad \rho(x)=E_x\int_0^{\tau_{D_0}}u(X_r)\,dr,\quad x\in D.
\end{equation}
Clearly, $w_n\in D(\EE_D)$ since $u\wedge n\in L^2(D;m)$.
By Dynkin's formula (see \cite[(4.4.2)]{FOT}),
\begin{equation}
\label{eq4.1.0.ext}
\rho(x)+E_x [w(X_{\tau_{D_0}})]=R^Du(x),\quad x\in D.
\end{equation}
By \cite[Lemma 4.3.1]{FOT}, $E_\cdot [w_n(X_{\tau_{D_0}})]$ is an excessive function for $n\ge 1$.
So, by \cite[Proposition II.2.2]{BG}, $E_\cdot [w(X_{\tau_{D_0}})]$ is an excessive function.  By the definition of an integral solution, $u\in L^1(D;m)$.
Therefore, by (\ref{eq2.3.1}), $R^Du$ is finite q.e. So, by (\ref{eq4.1.0.ext}), $E_\cdot [w(X_{\tau_{D_0}})]$ is finite q.e.
Consequently, $\rho$ as a difference of excessive functions
finite q.e. is quasi-continuous (see comments in Remark \ref{rm3.1}(iv)). Thus,  $h$ is quasi-continuous. 
By regularity of $D_0$, $\rho(x)=0,\, x\in D\setminus D_0$. Moreover, 
\begin{equation}
\label{eq4.1.0.ext1}
\rho(x)=\int_D G_{D_0}(x,y)u(y)\,m(dy)>0,\quad x\in D_0.
\end{equation}
By Dynkin's formula again
\[
u(x)=E_xu(X_{\tau_{D_0}})+aE_x\int_0^{\tau_{D_0}} u(X_r)\,dr-E_x\int_{0}^{\tau_{D_0}}\,dA^\nu_r\le 1+a\rho(x)=h(x),\quad\mbox{q.e.}
\]
From this and the fact that $\rho=0$ on $D\setminus D_0$, we get that the $n$-th power of $h$ satisfies
\[
u\le h^n\le\mathbb I_{D\setminus \overline D_0},\quad n\ge 1,\quad\mbox{q.e.}
\]
Hence, by the definition of an integral solution to the obstacle problem,
\begin{equation}
\label{eq4.1.0.0}
\int_D(h^n-u)\,d\nu=0,\quad n\ge 1.
\end{equation}
By (\ref{eq4.1.0.ext1}),  for every $x\in D_0$, $h^n(x)\nearrow \infty$ as $n\rightarrow\infty$. It follows from (\ref{eq4.1.0.0}) that  supp$[\nu]\subset D\setminus D_0$.  By this and (\ref{eq4.1.0.0}) again, for every nonnegative $\eta\in C_c(D)$ we have
\[
0=\int_D\eta(h-u)\,d\nu=\int_{D\setminus D_0}\eta(h-u)\,d\nu=\int_{D\setminus D_0}\eta(1-u)\,d\nu=\int_{D}\eta(1-u)\,d\nu,
\]
which implies the desired  result.
\end{proof}

\begin{proposition}
\label{prop4.1}
If  $u$ is an integral solution to {\rm{(\ref{eq1.1})}}, then
\begin{enumerate}
\item[\rm{(i)}] $u(x)\le c\|u\|_{L_1(D;m)}\varphi_1^D(x)$ for q.e. $x\in D$.
\item[\rm{(ii)}] $u\in C_0(D)$.
\item[\rm{(iii)}] The reaction measure  $\nu$ for $u$ is bounded and $\|\nu\|_{TV}\le a\|u\|_{L^1(D;m)}$.
\end{enumerate}
\end{proposition}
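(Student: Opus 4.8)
The plan is to prove the three assertions in the order (i), (iii), (ii). The pointwise bound (i) is the engine: it immediately gives boundedness of $u$, which is what drives the continuity in (ii), whereas (iii) is essentially independent and reads off the defining equation. The heart of (i) is the observation of Proposition \ref{prop4.2ms} that $u\cdot\nu=\nu$, which turns $u$ into an eigenfunction of the Feynman--Kac perturbed operator and lets me compare $u$ with the free semigroup $(P^D_t)$, after which intrinsic ultracontractivity (\ref{eq4.1}) does the rest.

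For (i), recall that an integral solution to (\ref{eq1.1}) satisfies $u=aR^Du-R^D\nu$ q.e. (so $f(u)=au$). Applying Lemma \ref{lm2.4.1} to $\mu=au\cdot m-\nu\in\mathbb M_0(D)$ yields a uniformly integrable MAF $M$ with $du(X_s)=-au(X_s)\,ds+dA^\nu_s+dM_s$ on $[0,\tau_D]$. I would then set $Z_s=e^{as-A^\nu_s}$ and apply the integration-by-parts rule (for the continuous, finite-variation factor $Z$) to get $d(Z_su(X_s))=Z_s(1-u(X_s))\,dA^\nu_s+Z_s\,dM_s$. The crucial point is that $u\cdot\nu=\nu$ forces $\int_0^{\cdot}(1-u(X_r))\,dA^\nu_r\equiv 0$ (the Revuz measure of $u\cdot A^\nu$ equals that of $A^\nu$), so the drift vanishes and $Z_su(X_s)$ is a martingale on $[0,t\wedge\tau_D]$, on which $Z\le e^{at}$ is bounded. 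Taking expectations at $s=0$ and using $u(X_{\tau_D})=0$ gives the single-time Feynman--Kac identity $u(x)=e^{at}E_x[e^{-A^\nu_t}u(X_t)\mathbf{1}_{\{t<\tau_D\}}]$ for q.e.\ $x$, i.e.\ (\ref{ab-fk_1}) read through the eigenvalue relation $T^{D,\nu}_tu=e^{-at}u$. Since $e^{-A^\nu_t}\le 1$, this gives $u(x)\le e^{at}P^D_tu(x)$. Fixing any $t>0$ and inserting the upper bound in (\ref{eq4.1}), together with $\varphi_1^D\in C_0(D)$ (so $\varphi_1^D\le\|\varphi_1^D\|_\infty$), I obtain $u(x)\le e^{at}c_2(t)\|\varphi_1^D\|_\infty\|u\|_{L^1(D;m)}\,\varphi_1^D(x)$, which is (i) with $c=e^{at}c_2(t)\|\varphi_1^D\|_\infty$.

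For (iii), from $u=aR^Du-R^D\nu$ and $u\ge 0$ I read off $R^D\nu=R^D(au)-u\le R^D(au)$ q.e., with $au\ge 0$ and $au\in L^1(D;m)$. Exactly as in the proof of Proposition \ref{prop3.3}, \cite[Lemma 5.4]{KR:JFA} then gives $\|\nu\|_{TV}=\nu(D)\le\|au\|_{L^1(D;m)}=a\|u\|_{L^1(D;m)}<\infty$, which is (iii).

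For (ii), part (i) gives $0<u\le c\|u\|_{L^1(D;m)}\varphi_1^D$ q.e., so $u$ is bounded and, since $\varphi_1^D\in C_0(D)$, $u$ vanishes at $\partial D$; it remains only to prove continuity inside $D$. Using the identity from (i), $u=e^{at}T^{D,\nu}_tu$ with $u$ bounded, so continuity follows once $(T^{D,\nu}_t)$ is known to be strong Feller; equivalently, writing $u=aR^Du-R^D\nu$, the term $aR^Du$ lies in $C_0(D)$ by the doubly Feller property of $(P^D_t)$ (as $au\in\BB_b(D)$), and one must show that the potential $R^D\nu$ of the reaction measure is continuous. I expect this continuity of $R^D\nu$ (equivalently, strong Fellerianity of the Feynman--Kac semigroup generated by the bounded, Green-bounded measure $\nu$) to be the main obstacle: $R^D\nu$ is automatically lower semicontinuous and is harmonic, hence continuous, off $\mathrm{supp}\,\nu$, so the difficulty is localized to $\mathrm{supp}\,\nu\subset D\setminus D_0$, where I would use the Green-boundedness $\sup_xR^D\nu<\infty$ (immediate from (i) and (iii)) together with a continuity result of Evans--Vasilesco type, or the doubly Feller stability of Feynman--Kac perturbations that is exploited later with Hansen's Green-function estimates. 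Granting this, $u$ is a difference of functions in $C_0(D)$, hence $u\in C_0(D)$.
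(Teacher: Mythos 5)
Parts (i) and (iii) of your proposal are essentially the paper's own arguments. For (i), the paper likewise substitutes $u\cdot\nu=\nu$ (Proposition \ref{prop4.2ms}) into $u=aR^Du-R^D\nu$, applies Lemma \ref{lm2.4.1} and integration by parts to $e^{at-A^\nu_t}u(X_t)$ to get $u(x)=e^{at}E_xe^{-A^\nu_t}u(X_t)$ q.e., and concludes by intrinsic ultracontractivity; your variant, where the drift $Z_s(1-u(X_s))\,dA^\nu_s$ is killed because the Revuz measure $(1-u)\cdot\nu$ vanishes, is equivalent (the paper substitutes $u\cdot\nu=\nu$ before the integration by parts, so the drift cancels identically). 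Your (iii) is verbatim the paper's: $R^D\nu\le aR^Du$ q.e. plus \cite[Lemma 5.4]{KR:JFA}.

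In (ii), however, there is a genuine gap, and you flag it yourself with ``Granting this.'' You reduce everything to continuity of $R^D\nu$ (equivalently, strong Fellerianity of the Feynman--Kac semigroup), but none of the tools you gesture at closes this with what is available at this point in the paper. The Evans--Vasilesco continuity principle requires the restriction of $R^D\nu$ to $\mathrm{supp}[\nu]$ to be continuous \emph{at every point} of the support, whereas the only handle you have is the identity $R^D\nu=aR^Du-u$ q.e. together with $u=1$ $\nu$-a.e.\ (Proposition \ref{prop4.2ms}); this identifies $R^D\nu$ with a continuous function only up to exceptional and $\nu$-null sets, and upgrading that to pointwise continuity on the support presupposes exactly the continuity of $u$ you are trying to establish --- the argument is circular. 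Note also that compactness of $\mathrm{supp}[\nu]$ (Proposition \ref{prop4.2}(i)) is itself deduced \emph{from} (ii), so it cannot be fed into (ii). The paper circumvents all of this by a different mechanism: it introduces the continuous majorant $h=1+a\rho$ with $\rho=E_\cdot\int_0^{\tau_{D_0}}u(X_r)\,dr$, which lies in $C(\overline D)$ by the doubly Feller property of $(P^{D_0}_t)$ (using the boundedness of $u$ from (i)) and vanishes off $D_0$ by Dirichlet regularity of $D_0$; since $u\le h\le\mathbb I_{D\setminus\overline D_0}$ q.e.\ (from the proof of Proposition \ref{prop4.2ms}), Remark \ref{rm3.1}(i) allows one to replace the obstacle by $h$, and then continuity of $u$ follows from Stettner's theorem \cite{Stettner} on obstacle problems (optimal stopping value functions) with continuous obstacle for doubly Feller semigroups. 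To make your route rigorous you would in effect have to reprove such a result; as written, the decisive step of (ii) is assumed, not proved.
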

\begin{proof}
By the definition of an integral solution to (\ref{eq1.1}) and Proposition \ref{prop4.2ms},
\[
u(x)=aE_x\int_0^{\tau_D}u(X_r)\,dr-E_x\int_0^{\tau_D}u(X_r)\,dA^\nu_r
\]
for q.e $x\in D$. Using  Lemma \ref{lm2.4.1} and the integration by part formula applied to the product
$e^{at-A^\nu_t}u(X_t)$ yields
\[
u(x)=e^{at}E_xe^{-A^\nu_t}u(X_t),\quad t\ge 0,
\]
for q.e. $x\in D$. Therefore, by the ultracontractivity of $(P^D_t)_{t\ge 0}$, for every $t>0$,
\[
u(x)\le e^{at}\int_Dp_D(t,x,y)u(y)\,dy\le c_te^{at}\varphi_1^D(x)\|u\|_{L^1(D;m)}
\]
for q.e. $x\in D$, which proves  (i). To prove  (ii), consider  the function $h$ defined by (\ref{eq4.1.0}).
By regularity of the set $D_0$ and  \cite{Chung}, $(P^{D_0}_t)_{t\ge 0}$ is doubly Feller (cf. Section \ref{sec2.4}). Therefore, by (i) $\rho\in C(D_0)$. By regularity of $D_0$, in fact $\rho\in C_0(D_0)$. Thus,  $h\in C(\overline D)$. By the proof of Proposition \ref{prop4.2ms},
\[
u\le h\le \mathbb I_{D\setminus \overline D_0},\quad \mbox{q.e.}
\]
So, by Remark \ref{rm3.1}(i), $u$ is an integral solution
to (\ref{eq1.1}) with $ \mathbb I_{D\setminus \overline D_0}$ replaced by $h$. Therefore, $u\in C_0(D)$
by \cite[Theorem 1]{Stettner}, which proves (ii).
By the definition of an integral solution to (\ref{eq1.1}),
\begin{equation}
\label{eq4.1.01}
E_x\int_0^{\tau_D}\,dA^\nu_r\le aE_x\int_0^{\tau_D} u(X_r)\,dr
\end{equation}
for q.e. $x\in D$.
From (\ref{eq4.1.01})  and \cite[Lemma 5.4]{KR:JFA} we get (iii).
\end{proof}

\begin{proposition}
\label{prop4.2}
Let $u$ be an integral solution to {\rm{(\ref{eq1.1})}} and $\nu$ be its reaction measure.
Then
\begin{enumerate}
\item[\rm{(i)}] supp$[\nu]$ is a compact subset of $D$.
\item[\rm{(ii)}] $\sup_{x\in D}R^D\nu(x)<\infty$.
\end{enumerate}

\end{proposition}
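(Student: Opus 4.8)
The plan is to exploit the two structural facts already in hand: the contact relation $u\cdot\nu=\nu$ from Proposition \ref{prop4.2ms}, which pins $\nu$ to the set $\{u=1\}$, together with the ground-state bound $u\le c\|u\|_{L^1(D;m)}\varphi_1^D$ q.e.\ from Proposition \ref{prop4.1}(i), in which $\varphi_1^D\in C_0(D)$.

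For (i), I set $M=c\|u\|_{L^1(D;m)}$ and introduce the level set $K=\{x\in D:\varphi_1^D(x)\ge M^{-1}\}$. Since $\varphi_1^D\in C_0(D)$ vanishes on $\partial D$ and $D$ is bounded, $K$ is closed, bounded and disjoint from $\partial D$, hence a compact subset of $D$. On the complement $D\setminus K$ one has $u\le M\varphi_1^D<1$ q.e. The relation $u\cdot\nu=\nu$ means exactly that $u=1$ $\nu$-a.e., so $\nu$ is concentrated on $\{u=1\}$. As $\nu\in\mathbb M_0(D)$ charges no exceptional set, and the inequality $u<1$ holds q.e.\ on $D\setminus K$, the set $\{u=1\}\cap(D\setminus K)$ is exceptional and therefore $\nu$-null; together with $\nu(\{u\neq1\})=0$ this gives $\nu(D\setminus K)=0$. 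Hence $\mathrm{supp}[\nu]\subset K$, which proves (i).

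For (ii), I start from the integral equation. Since here $f(u)=au$, Definition \ref{df3.1}(b) yields $u=aR^Du-R^D\nu$ q.e., whence
\[
0\le R^D\nu=aR^Du-u\le aR^Du\quad\mbox{q.e.}
\]
For every $x\in D$ one has $R^Du(x)=\int_D G_D(x,y)u(y)\,dy\le\|u\|_\infty R^D1(x)$ and $R^D1(x)=E_x\tau_D$. Choosing a ball $B\supset D$ gives $\tau_D\le\tau_B$ pathwise, so $C:=\sup_{x\in D}R^D1(x)\le\sup_{x}E_x\tau_B<\infty$; moreover $\|u\|_\infty<\infty$ by Proposition \ref{prop4.1}(ii). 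Combining these, $R^D\nu\le a\|u\|_\infty C$ q.e., hence $m$-a.e.

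The one point requiring care—and the main obstacle—is upgrading this $m$-a.e.\ bound to a genuine supremum over \emph{all} $x\in D$. For this I use that $R^D\nu$ is excessive (\cite[Lemma 4.2.4]{FOT}). For every $x\in D$ and $t>0$,
\[
P_t^D(R^D\nu)(x)=\int_D p_D(t,x,y)R^D\nu(y)\,dy\le a\|u\|_\infty C\int_D p_D(t,x,y)\,dy\le a\|u\|_\infty C,
\]
since $R^D\nu\le a\|u\|_\infty C$ $m$-a.e.\ and $p_D(t,x,\cdot)$ is a sub-probability density. Taking the supremum over $t>0$ and invoking the excessivity identity $R^D\nu=\sup_{t>0}P_t^D(R^D\nu)$ yields $R^D\nu(x)\le a\|u\|_\infty C$ for every $x\in D$, which is (ii).
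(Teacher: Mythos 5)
Your proof is correct, and its skeleton coincides with the paper's: both parts rest on the contact relation $u\cdot\nu=\nu$ from Proposition \ref{prop4.2ms} together with the bounds of Proposition \ref{prop4.1}. The differences are minor but worth recording. For (i), the paper argues more directly: since $u\in C_0(D)$ by Proposition \ref{prop4.1}(ii), the set $\{u=1\}$ is itself a compact subset of $D$, and $\mathrm{supp}[\nu]\subset\{u=1\}$ finishes the proof. You instead pass through the level set $K=\{\varphi_1^D\ge M^{-1}\}$ and the q.e.\ ground-state bound of Proposition \ref{prop4.1}(i); this is sound, but it requires the additional observation --- which you correctly supply --- that $\nu$ charges no exceptional set, valid because $\nu\in\mathbb M_0(D)$ is smooth. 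Your variant has the small advantage of not using the continuity of $u$ at all, only the q.e.\ domination by $\varphi_1^D$. For (ii), your inequality $R^D\nu\le aR^Du$ q.e.\ is exactly the paper's (\ref{eq4.1.01}) read through (\ref{eq2.4.4}), and the upgrade from an $m$-a.e.\ bound to a bound at \emph{every} $x\in D$ via the excessivity identity $R^D\nu=\sup_{t>0}P^D_t(R^D\nu)$ and the sub-Markov kernel $p_D(t,x,\cdot)$ is precisely the step the paper compresses into ``follows easily''; making it explicit is correct and a genuine service to the reader.
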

\begin{proof}
By Proposition \ref{prop4.2ms},
supp$[\nu]\subset \{u=1\}$, which when combined with Proposition \ref{prop4.1}(ii) implies (i).
Assertion  (ii) follows easily from (\ref{eq4.1.01}), Proposition \ref{prop4.1} and (\ref{eq2.4.4}).
\end{proof}

\begin{proposition}
\label{prop4.4}
Let $u$ be an integral solution to {\rm{(\ref{eq1.1})}} and let $\nu$ be its reaction measure. Then $u\in D(\EE_D)$, $\nu\in S_0(D)$ and for every $\eta\in D(\EE_D)$,
\begin{equation}
\label{eq4.4}
\EE_D(u,\eta)=a(u,\eta)-\int_D\tilde\eta\,d\nu.
\end{equation}
\end{proposition}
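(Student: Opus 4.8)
The plan is to use the integral representation of $u$ to reduce everything to the $0$-order potential operator $R^D$ and to the energy identity (\ref{eq.smfe1}). Since the equation in (\ref{eq1.1}) corresponds to $f(t)=at$, condition (b) of Definition \ref{df3.1} reads
\[
u=aR^Du-R^D\nu\quad\text{q.e. on }D.
\]
First I would record that $u\in L^2(D;m)$: by Proposition \ref{prop4.1}(i) we have $0<u\le c\|u\|_{L^1(D;m)}\varphi_1^D$ q.e., and $\varphi_1^D\in L^2(D;m)$ (indeed $\|\varphi_1^D\|_{L^2}=1$), so $u\in L^2(D;m)$. Consequently $aR^Du=aJ^Du\in D((\Delta^{\alpha/2})_{|D})\subset D(\EE_D)$, and since $J^Du=(-(\Delta^{\alpha/2})_{|D})^{-1}u$,
\[
\EE_D(R^Du,\eta)=(u,\eta),\qquad \eta\in D(\EE_D).
\]

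The main step is to prove $\nu\in S_0(D)$ and $R^D\nu\in D(\EE_D)$. Here I would invoke the two estimates already at our disposal: by Proposition \ref{prop4.1}(iii) the measure $\nu$ is bounded, $\|\nu\|_{TV}\le a\|u\|_{L^1(D;m)}$, while by Proposition \ref{prop4.2}(ii) its potential is uniformly bounded, $\sup_{x\in D}R^D\nu(x)<\infty$. Hence the energy integral of $\nu$ is finite,
\[
\int_DR^D\nu\,d\nu\le\Big(\sup_{x\in D}R^D\nu(x)\Big)\,\|\nu\|_{TV}<\infty .
\]
Through the resolvent approximation $R^D_\beta\nu\nearrow R^D\nu$ as $\beta\downarrow0$ (each $R^D_\beta\nu$ has finite $\beta$-energy controlled by $\int_DR^D\nu\,d\nu$, and one passes to the limit using the lower semicontinuity of $\EE_D$ together with the coercivity $\EE_D(\cdot,\cdot)\ge\lambda_1^D\|\cdot\|_{L^2}^2$ coming from $\lambda_1^D>0$), this finiteness yields $R^D\nu\in D(\EE_D)$ and $\nu\in S_0(D)$ by the theory of measures of finite energy integral (see \cite[Section 2.2]{FOT}). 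In particular (\ref{eq.smfe1}) applies and gives
\[
\EE_D(R^D\nu,\eta)=\int_D\tilde\eta\,d\nu,\qquad \eta\in D(\EE_D).
\]

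Finally I would combine the two facts. From $u=aR^Du-R^D\nu$ q.e. and $aR^Du,\,R^D\nu\in D(\EE_D)$ we obtain $u\in D(\EE_D)$, and then for every $\eta\in D(\EE_D)$,
\[
\EE_D(u,\eta)=a\,\EE_D(R^Du,\eta)-\EE_D(R^D\nu,\eta)=a(u,\eta)-\int_D\tilde\eta\,d\nu,
\]
which is (\ref{eq4.4}). The delicate point, and essentially the only place requiring genuine work, is the passage from $\int_DR^D\nu\,d\nu<\infty$ to the membership $R^D\nu\in D(\EE_D)$ and $\nu\in S_0(D)$; the boundedness of $\nu$ and of its potential (Propositions \ref{prop4.1}(iii) and \ref{prop4.2}(ii)) are precisely what make the approximation argument go through. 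An alternative, slightly shorter route to $u\in D(\EE_D)$ is to observe that, by the computation in the proof of Proposition \ref{prop4.1}, one has $T^{D,\nu}_tu=e^{-at}u$ for the Feynman--Kac semigroup of Section \ref{sec2.5} (which is well defined since $\nu\in\MM_{0,b}(D)$ and $\sup_{x\in D}R^D\nu(x)<\infty$); as $u\in L^2(D;m)$ this forces $u\in D(A^\nu_D)\subset D(\EE^\nu_D)\subset D(\EE_D)$, after which $R^D\nu=aR^Du-u\in D(\EE_D)$ and one concludes exactly as above.
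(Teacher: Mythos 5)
Your proof is correct, and it takes a genuinely different route from the paper's. The paper proves Proposition \ref{prop4.4} by appealing to renormalized solutions: since $u\in L^1(D;m)$ and $\nu\in\MM_{0,b}(D)$, \cite[Theorem 3.5]{KR:NoDEA} makes $u$ a renormalized solution, which gives $u\wedge k\in D(\EE_D)$ and identities $\EE_D(u\wedge k,\eta)+\int_D\tilde\eta\,d\nu=a(u,\eta)+\int_D\tilde\eta\,d\nu_k$ with $\|\nu_k\|_{TV}\to0$; boundedness of $u$ (Proposition \ref{prop4.1}(i)) then yields $u\in D(\EE_D)$, and \eqref{eq4.4} together with $\nu\in S_0(D)$ follows by letting $k\to\infty$ and approximating. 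You instead argue directly from the representation $u=aR^Du-R^D\nu$, using bijectivity of $(\Delta^{\alpha/2})_{|D}$ on $L^2(D;m)$ for the term $aR^Du$, and the finite-energy bound $\int_DR^D\nu\,d\nu\le\big(\sup_{x\in D}R^D\nu(x)\big)\|\nu\|_{TV}<\infty$ for the term $R^D\nu$. This relies on Proposition \ref{prop4.2}(ii), an input the paper's proof does not need; in exchange you avoid the external renormalized-solutions machinery entirely, and your closing Feynman--Kac remark ($T^{D,\nu}_tu=e^{-at}u$, which is precisely the computation inside the proof of Proposition \ref{prop4.1}) ties the proposition to the eigenfunction picture of the introduction. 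The one step you should tighten is the passage from $\int_DR^D\nu\,d\nu<\infty$ to $R^D\nu\in D(\EE_D)$ and $\nu\in S_0(D)$: as sketched, your resolvent approximation presupposes $R^D_\beta\nu\in D(\EE_D)$ together with the $\beta$-order analogue of \eqref{eq.smfe1}, which is the same kind of assertion one level up. It can be closed without circularity by applying the standard characterization of $D(\EE_D)$ through the approximating forms (see \cite[Lemma 1.3.4]{FOT}) to the bounded excessive function $v:=R^D\nu\in L^2(D;m)$: the resolvent equation gives $v-\beta J^D_\beta v=R^D_\beta\nu$ $m$-a.e., so by symmetry of $G_{D,\beta}$ and excessiveness of $v$,
\[
\beta\big(v-\beta J^D_\beta v,v\big)=\beta\big(R^D_\beta\nu,v\big)=\int_D\beta R^D_\beta v\,d\nu\le\int_Dv\,d\nu<\infty
\]
uniformly in $\beta$, whence $v\in D(\EE_D)$ with $\EE_D(v,\eta)=\lim_{\beta\to\infty}\beta(R^D_\beta\nu,\eta)=\int_D\tilde\eta\,d\nu$ (first for bounded $\eta$, using $\beta R^D_\beta\tilde\eta\to\tilde\eta$ q.e. along a subsequence and $\nu(D)<\infty$, then for all $\eta\in D(\EE_D)$ by Cauchy--Schwarz); this is exactly $\nu\in S_0(D)$ and \eqref{eq.smfe1}, after which your final computation of $\EE_D(u,\eta)$ is immediate. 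In short: a correct, more self-contained potential-theoretic argument than the paper's, bought at the price of the stronger (but already established) hypotheses $\|\nu\|_{TV}<\infty$ and $\sup_{x\in D}R^D\nu<\infty$.
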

\begin{proof}
By Proposition \ref{prop4.1}, $u\in L^1(D;m)$ and $\nu\in \MM_{0,b}(D)$. Therefore, by \cite[Theorem 3.5]{KR:NoDEA},  $u$ is a renormalized  solution to (\ref{eq1.1}) in the sense defined in  \cite{KR:NoDEA}. By the definition of a renormalized  solution,  $u\wedge k\in D(\EE_D)$, $k\ge 1$, which when combined with  Proposition \ref{prop4.1}(i) implies that $u\in D(\EE_D)$. 
Moreover, by the definition of a renormalized  solution, there exists a sequence $\nu_k\subset \MM_{0,b}(D)$ such that $\|\nu_k\|_{TV}\rightarrow 0$ and for every bounded $\eta\in D(\EE_D)$,
\[
\EE_D(u\wedge k,\eta)+\int_D\tilde\eta\,d\nu=a(u,\eta)+\int_D\tilde\eta\,d\nu_k.
\]
Letting $k\rightarrow \infty$ yields (\ref{eq4.4}) for every bounded $\eta\in D(\EE_D)$. Applying now a simple approximation argument, we get that $\nu\in S_0(D)$, and (\ref{eq4.4}) holds for any $\eta\in D(\EE_D)$.
\end{proof}

Let $w$ be a strictly positive excessive function with respect to $(P^D_t)_{t\ge 0}$.
We say that $G_D$ has  $w$-triangle property iff there exists $C>0$ such that
\begin{equation}
\label{eq4.wtp1}
G_D(x,z)G_D(z,y)\le CG_D(x,y)\max\Big(\frac{w(z)}{w(x)}G_D(x,z),\frac{w(z)}{w(y)}G_D(z,y)\Big),\quad x,y,z\in D.
\end{equation}
The above notion was introduced in \cite{Hansen}.  Observe that if we set 
\[
\rho(x,y):=\frac{w(x)w(y)}{G_D(x,y)},\quad x,y\in D,
\]
then \eqref{eq4.wtp1} is equivalent to
\begin{equation}
\label{eq4.wtp1abc}
\rho(x,y)\le C\max\big(\rho(x,z),\rho(z,y)\big),\quad x,y,z\in D.
\end{equation}
Clearly, $\rho(x,y)=0$ if and only if $x=y$, and $\rho(x,y)=\rho(y,x)$. Therefore, $w$-triangle property
is equivalent to the statement that $\rho$ is a quasi-metric on $D$.

Recall here that at the beginning of Section \ref{sec2}, we introduced
$\lambda>0$ and $r_0>0$. We shall show that $G_D$ has $w$-triangle property for
Lipschitz domains, where $w=\phi$ and 
\[
\phi(x)=\min(G_D(x,x_0),c_{d,\alpha}(r_0/4)^{\alpha-d}),\quad x\in D
\]
for a fixed $x_0\in D$. Here $c_{d,\alpha}=\frac{\Gamma((d-\alpha)/2)}{2^\alpha\pi^{d/2}|\Gamma(\alpha/2)|}$.
Set $\kappa= 1/2\sqrt{1+\lambda^2}$, and fix $x_1\in D$ such that $|x_0-x_1|=r_0/4$. We let $\delta_D(x):= dist(x,\partial D),\, x\in D$.

\begin{proposition}
\label{prop4.wtp1}
Green function $G_D$ has  $\phi$-triangle property.
\end{proposition}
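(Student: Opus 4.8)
The plan is to verify the reformulation recorded just above the statement: it suffices to show that
\[
\rho(x,y)=\frac{\phi(x)\phi(y)}{G_D(x,y)}
\]
is a quasi-metric on $D$, i.e. that $\rho(x,y)\le C\max(\rho(x,z),\rho(z,y))$ for all $x,y,z\in D$. Fixing the ordering and assuming $\rho(x,z)\le\rho(z,y)$, a short cancellation of the common factor $\phi(z)$ turns this hypothesis into $\phi(x)G_D(z,y)\le\phi(y)G_D(x,z)$, while the desired conclusion $\rho(x,y)\le C\rho(z,y)$ becomes, after cancelling $\phi(y)$, the single pointwise estimate
\[
\phi(x)\,G_D(z,y)\le C\,\phi(z)\,G_D(x,y).
\]
So the whole task reduces to deriving this last inequality from the hypothesis together with sharp two-sided bounds for $G_D$.

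The first ingredient is a set of elementary bounds for $G_D$ on the Lipschitz domain $D$. By domain monotonicity one has the free-kernel upper bound $G_D(x,y)\le c_{d,\alpha}|x-y|^{\alpha-d}$, and by comparison with the explicitly known Green function of an inscribed ball (the Blumenthal--Getoor--Ray formula) one obtains matching interior lower bounds, so that $G_D(x,y)\approx c_{d,\alpha}|x-y|^{\alpha-d}$ whenever both $x,y$ lie at distance $\ge c|x-y|$ from $\partial D$. The constants $\kappa=1/(2\sqrt{1+\lambda^2})$ and the auxiliary point $x_1$ with $|x_0-x_1|=r_0/4$ enter precisely here: they encode the interior cone opening of a Lipschitz domain and allow one to place, near any boundary point, interior reference points at a controlled distance $\approx\kappa\,\delta_D$ from $\partial D$. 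I would then record the properties of $\phi$ needed later: it is bounded above by its cap $c_{d,\alpha}(r_0/4)^{\alpha-d}$, it coincides with $G_D(\cdot,x_0)$ away from $x_0$, it is bounded below on compact interior subsets, and it is comparable to the boundary-decay profile of $G_D(\cdot,y)$ for each fixed $y$.

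The decisive ingredient is the boundary Harnack principle for the fractional Laplacian on Lipschitz domains: for nonnegative functions that are $\alpha$-harmonic in $D\cap B(\xi,r)$, $\xi\in\partial D$, and vanish off $D$ there, their ratio is bounded above and below by a constant depending only on $(r_0,\lambda,\alpha,d)$. Applied to $x\mapsto G_D(x,y)$ and $x\mapsto G_D(x,x_0)=\phi(x)$, it yields, for $x$ near $\partial D$, the comparison $G_D(x,y)/\phi(x)\approx G_D(w,y)/\phi(w)$ at a suitable interior reference point $w=w(x)$, which is exactly the mechanism that converts the hypothesis into the target inequality. I would complete the proof by a case analysis of $\phi(x)G_D(z,y)\le C\phi(z)G_D(x,y)$ organized according to the relative sizes of $|x-y|$, $|x-z|$, $|z-y|$ and of the distances of $x,y,z$ to $\partial D$: an \emph{interior} regime where the free-kernel estimates apply directly, and several \emph{boundary} regimes where the boundary Harnack principle transfers each factor to an interior reference point where the interior estimates are available.

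The main obstacle I anticipate is the boundary analysis. Unlike the $C^{1,1}$ case, a Lipschitz domain carries no explicit boundary-decay exponent, so every near-boundary bound must be routed through the boundary Harnack principle and through the truncated function $\phi$ rather than through a power of $\delta_D$. Keeping the bookkeeping uniform across the cases---in particular reconciling the cap in the definition of $\phi$, which governs the regime near $x_0$, with the genuine boundary decay, which governs the regime near $\partial D$---is where the quantitative use of $\kappa$, $x_1$ and the inscribed-ball estimates will be essential, and is the step most likely to require care.
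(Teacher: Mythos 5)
Your reduction to the quasi-metric inequality and your choice of engine (interior two-sided kernel bounds plus the boundary Harnack principle) are sound --- indeed BHP is exactly the machinery behind the estimates the paper uses --- but as written there is a genuine gap: the decisive step, the multi-regime boundary case analysis, is only announced, and that analysis \emph{is} the theorem. The paper's actual proof is three lines long because it quotes Jakubowski's sharp two-sided estimate \cite[Theorem 1]{Jakubowski},
\[
G_D(x,y)\asymp \frac{\phi(x)\phi(y)}{\phi^2(A)},\qquad A\in\mathfrak B(x,y),
\]
which already packages all of the BHP/Harnack-chain work you propose to redo. With this estimate, $\rho(x,y)\asymp\phi^2(A(x,y))$, and the quasi-metric property reduces to comparing $\phi$ at the intermediate points of the pairs $(x,y)$ and $(x,z)$; under the geometric dichotomy $|x-z|\ge|y-z|$ one has $|x-y|\le 2|x-z|$, and then $\phi(A(x,y))/\phi(A(x,z))\le C_1$ by \cite[Lemma 13]{Jakubowski}, giving $\rho(x,y)\le C\rho(x,z)$, with the symmetric case handled by swapping $x$ and $y$. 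Note also that the paper's case split is geometric, not analytic: your normalization $\rho(x,z)\le\rho(z,y)$ is a correct reduction and composes with the geometric dichotomy (in the case $|x-z|\ge|y-z|$ the hypothesis upgrades $C\rho(x,z)$ to $C\rho(z,y)$), but by itself it carries no geometric information in a Lipschitz domain --- there is no explicit decay exponent to invert --- so in executing your plan you would be driven back to the geometric split anyway.

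Two concrete points where your sketched BHP analysis would need real work. First, the near-diagonal near-boundary regime: when $x$ and $y$ are both close to $\partial D$ and close to each other, $G_D(\cdot,y)$ is singular at $y$ and hence not $\alpha$-harmonic in any boundary ball containing $y$, so BHP cannot be applied to the pair $(G_D(\cdot,y),\phi)$ there; this is precisely the regime that Jakubowski handles through the corkscrew point $A$ with $B(A,\kappa r)\subset D\cap B(x,3r)\cap B(y,3r)$, whose existence (with $\kappa=1/2\sqrt{1+\lambda^2}$) comes from the Lipschitz character $(r_0,\lambda)$ --- your sketch does not say how this case is closed. Second, when the three points are mutually far apart but all near $\partial D$, transferring each ratio to interior reference points requires Harnack chains of uniformly bounded length, and keeping the constants depending only on $(r_0,\lambda,\alpha,d)$ across all regimes is exactly the bookkeeping you flag as ``most likely to require care.'' Carrying this out in full amounts to reproving \cite[Theorem 1]{Jakubowski}; it can be done, but until it is, the proposal is a program rather than a proof. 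The efficient repair is simply to invoke the two-sided estimate and Lemma 13 of \cite{Jakubowski}, as the paper does.
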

\begin{proof}
Let $x,y,z,z_1,z_2\in D$.
By \cite[Theorem 1]{Jakubowski}, 
\begin{equation}
\label{eq4.wtp2}
c_1\frac{\phi(x)\phi(z_i)}{\phi^2(A_i)}\le G_D(x,z_i)\le c_2\frac{\phi(x)\phi(z_i)}{\phi^2(A_i)},
\end{equation}
for $A_i\in\mathfrak B(x,z_i), i=1,2$, where $c_1,c_2$ depend only on $D,d,\lambda,\alpha$. Here, for $r_i:= \delta_D(x)\vee \delta_D(z_i)\vee|x-z_i|\le r_0/32$,
\[
\mathfrak B(x,z_i)=\{A\in D: B(A,\kappa r_i)\subset D\cap B(x,3r_i)\cap B(z_i,3r_i) \},
\]
and $\mathfrak B(x,z_i)=\{x_1\}$ for $r_i>r_0/32$. Suppose that $|x-z|\ge |y-z|$. Set
$z_1:=y, z_2:= z$. By (\ref{eq4.wtp2}),
\[
\frac{G_D(x,z)}{G_D(x,y)}=\frac{G_D(x,z_2)}{G_D(x,z_1)}\le \frac{c_2}{c_1}\frac{\phi(z_2)}{\phi(z_1)}\Big(\frac{\phi(A_1)}{\phi(A_2)}\Big)^2
=\frac{c_2}{c_1}\frac{\phi(z)}{\phi(y)}\Big(\frac{\phi(A_1)}{\phi(A_2)}\Big)^2.
\] 
Observe that $|x-z_1|\le 2|x-z_2|$. Therefore, by \cite[Lemma 13]{Jakubowski}, there exists $C_1>0$ such that $\frac{\phi(A_1)}{\phi(A_2)}\le C_1$,
where $C_1$ depends only on $\alpha$, $d$ and  $D$. Consequently, there exists $C>0$ such that
\[
\frac{G_D(x,z)}{G_D(x,y)}\le C\frac{\phi(z)}{\phi(y)},\quad x,y,z\in D;\,  |x-z|\ge |y-z|.
\]
Equivalently,
\[
\rho(x,y)\le C\rho(x,z),\quad x,y,z\in D;\,  |x-z|\ge |y-z|.
\]
Hence we get easily (\ref{eq4.wtp1abc}).
\end{proof}

Let $G$ denote the Green function for $\BR^d$ (and the operator $\Delta^{\alpha/2}$). It is well known that there is $c>0$ such that
\[
G(x,y)=\frac{c}{|x-y|^{d-\alpha}}\,,\quad x,y\in\BR^d.
\]

\begin{lemma}
\label{lm4.1}
Assume that $\mu\in\MM_{0,b}(D)$ is nonnegative, $K:=\mbox{supp}[\mu]$ is a compact subset of $D$
and $\sup_{x\in D}R^D\mu(x)<\infty$. Then $G_D^\mu\sim G_D$, i.e. there exist $c_1, c_2>0$ such that $c_1G_D^\mu\le  G_D\le c_2 G_D^\mu$ on $D\times D$.
\end{lemma}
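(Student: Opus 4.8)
The plan is to prove the two inequalities separately, the upper bound being elementary and the lower bound being the substantial part. For the upper bound, recall from the Feynman--Kac formula \eqref{ab-fk_1} that since $\mu\ge0$ the multiplicative functional satisfies $e^{-A^\mu_t}\le1$, whence $T^{D,\mu}_t f\le T^D_t f$ for nonnegative $f$. Integrating the corresponding transition densities in $t$ gives $p^\mu_D(t,x,y)\le p_D(t,x,y)$ and hence $G^\mu_D\le G_D$ on $D\times D$. Thus $c_1=1$ works for $c_1 G^\mu_D\le G_D$, and it remains to produce $c_2$ with $G_D\le c_2 G^\mu_D$.

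For the lower bound I would pass to the conditional (Doob $h$-transform) representation of the perturbed Green function, which is the probabilistic device underlying the cited results of Hansen \cite{Hansen}. Writing $E^y_x$ for the expectation governing the process conditioned via $G_D(\cdot,y)$ to die at $y$, the Feynman--Kac formula yields the conditional gauge identity
\[
\frac{G^\mu_D(x,y)}{G_D(x,y)}=E^y_x\big[e^{-A^\mu_{\zeta}}\big],\qquad x,y\in D,
\]
so that $G_D\le c_2 G^\mu_D$ is equivalent to a uniform positive lower bound on the conditional gauge. By Jensen's inequality $E^y_x[e^{-A^\mu_{\zeta}}]\ge \exp(-E^y_x[A^\mu_{\zeta}])$, and for the PCAF $A^\mu$ the conditioned expectation equals
\[
E^y_x[A^\mu_{\zeta}]=\frac{1}{G_D(x,y)}\int_D G_D(x,z)G_D(z,y)\,\mu(dz)=:N_\mu(x,y).
\]
Hence the whole matter reduces to showing that the conditional Kato norm $\sup_{x,y\in D}N_\mu(x,y)$ is finite; then one may take $c_2=\exp(\sup_{x,y}N_\mu(x,y))$.

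To bound $N_\mu$ I would invoke the $\phi$-triangle property established in Proposition \ref{prop4.wtp1}. By \eqref{eq4.wtp1} with $w=\phi$,
\[
\frac{G_D(x,z)G_D(z,y)}{G_D(x,y)}\le C\Big(\frac{\phi(z)}{\phi(x)}G_D(x,z)+\frac{\phi(z)}{\phi(y)}G_D(z,y)\Big),
\]
so by symmetry it suffices to bound $\sup_{x\in D}\phi(x)^{-1}\int_K \phi(z)G_D(x,z)\,\mu(dz)$, where $K=\mathrm{supp}[\mu]$. Since $K$ is a compact subset of $D$, $\phi$ is bounded above on $K$, and the integral is dominated by a multiple of $\phi(x)^{-1}R^D\mu(x)$. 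I would then split $D$ into an interior part $D\setminus U$, where $\phi$ is bounded below so that $\sup_x R^D\mu(x)<\infty$ finishes the estimate, and a boundary collar $U$ chosen disjoint from $K$, where for $z\in K$ the boundary Harnack principle (equivalently, the estimate \eqref{eq4.wtp2} of Jakubowski) gives $G_D(x,z)\le C_K\phi(x)$ and hence $\phi(x)^{-1}R^D\mu(x)\le C_K\mu(K)$.

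The main obstacle is precisely this last step: controlling $\phi(x)^{-1}R^D\mu(x)$ uniformly up to $\partial D$. It is here that both hypotheses are indispensable --- the compact support of $\mu$ separates $K$ from the boundary, so that no singularity of the Green function interferes with the boundary estimates, while $\sup_{x\in D}R^D\mu(x)<\infty$ handles the interior, including the diagonal singularity of $G_D$. Once $\sup_{x,y}N_\mu(x,y)<\infty$ is secured, the Jensen bound of the second paragraph yields the desired $c_2$, and together with the elementary inequality $G^\mu_D\le G_D$ this gives $G^\mu_D\sim G_D$.
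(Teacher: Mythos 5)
Your proof is correct, and it reaches the comparison $G_D\sim G^\mu_D$ by a genuinely different route from the paper's. Both arguments pivot on the same quantitative core, namely the uniform bound $\sup_{x,y\in D}N_\mu(x,y)<\infty$ for $N_\mu(x,y)=G_D(x,y)^{-1}\int_DG_D(x,z)G_D(z,y)\,\mu(dz)$, which is precisely the paper's inequality \eqref{eq.qe.ge}; but you both establish it and exploit it differently. The paper proves \eqref{eq.qe.ge} by transferring to the free Green function $G(x,y)=c|x-y|^{\alpha-d}$ via the generalized 3G theorem of \cite{KimLee}, controlling $\sup_xR\bar\mu$ through the Chen--Song lower bound $G_D\ge cG$ near $K$; you instead deduce it from the $\phi$-triangle property of Proposition \ref{prop4.wtp1} combined with an interior/boundary-collar splitting and Jakubowski's estimate \eqref{eq4.wtp2} --- a self-contained alternative that recycles Proposition \ref{prop4.wtp1} at an earlier stage (the paper uses that proposition only later, to feed Hansen's theorem). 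More significantly, to convert the bound on $N_\mu$ into Green-function comparability, the paper detours through excessive functions (the Dellacherie--Meyer representation) to verify the $\phi$-weighted condition of Hansen and then invokes \cite[Theorem 9.1]{Hansen} as a black box, whereas you argue probabilistically via the conditional gauge identity plus Jensen's inequality, which yields the explicit constant $c_2=\exp\bigl(\sup_{x,y}N_\mu(x,y)\bigr)$ and neatly avoids any Khasminskii-type smallness requirement; your elementary bound $G^\mu_D\le G_D$ from $e^{-A^\mu_t}\le1$ correctly settles the other direction with $c_1=1$. What your route buys is transparency and an explicit constant; what it costs is that the identities $G^\mu_D(x,y)=G_D(x,y)\,E^y_x\bigl[e^{-A^\mu_\zeta}\bigr]$ and $E^y_x\bigl[A^\mu_\zeta\bigr]=N_\mu(x,y)$ for the $G_D(\cdot,y)$-conditioned killed stable process are themselves nontrivial facts that you assert rather than prove. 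They are available in the literature (classically \cite{CFZ} and \cite{CZ}; for $\alpha$-stable processes and smooth measures, in the conditional gauge theorems of Chen and Song), but in a complete write-up you should cite a version whose hypotheses cover a general PCAF of a bounded measure with bounded potential: note that $\sup_{x\in D}R^D\mu<\infty$ does not by itself place $\mu$ in the Kato class, so either quote a formulation for smooth measures or justify the identity by monotone approximation.
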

\begin{proof}
Let $\bar \mu $ be an extension of $\mu$ to $\BR^d$ defined as $\bar\mu (B)= \mu(D\cap B)$ for any Borel set $B\subset\BR^d$.
Let $V$ be an open  set such that $K\subset V\subset\overline V\subset D$, and let $r=\mbox{dist}(K,\partial V)$.
By \cite[Theorem 1.2]{ChenSong}, there is $c>0$ such that
\[
cG(x,y)\le G_D(x,y),\quad x,y\in V.
\]
From this and the assumptions of the lemma it follows that
\begin{align}
\label{eq4.2}
\nonumber\sup_{x\in D}R\bar\mu(x)&\le\max\{\sup_{x\in V}R\bar\mu(x),\sup_{x\in D\setminus V}R\bar\mu(x)\}\\&
\nonumber \le
\max\Big\{c\sup_{x\in V}R^D\mu(x),\sup_{x\in D\setminus V}\int_K G(x,y)\,\mu(dy)\Big\}\\
&\le c\max\{\sup_{x\in D}R^D\mu(x),r^{\alpha-d}\|\mu\|_{TV}\}.
\end{align}
Next, for all $x,y\in D$,
\begin{align}
\label{eq4.3}
\nonumber \int_D\frac{G(x,z)G(z,y)}{G(x,y)}\,\mu(dz)
&=\int_D\frac{|x-y|^{d-\alpha}}{|x-z|^{d-\alpha}|z-y|^{d-\alpha}}\,\mu(dz)\\
&\nonumber
\le 2^{d-\alpha}\int_D\frac{\max\{|x-z|^{d-\alpha},|z-y|^{d-\alpha}\}}
{|x-z|^{d-\alpha}|z-y|^{d-\alpha}}\,\mu(dz)\\&
\le 2^{d-\alpha}R\bar\mu(x)+2^{d-\alpha}R\bar\mu(y).
\end{align}
By the 3G Theorem (see \cite{KimLee}),
\[
\int_D\frac{G_D(x,z)G_D(z,y)}{G_D(x,y)}\,\mu(dz)\le c \int_D\frac{G(x,z)G(z,y)}{G(x,y)}\,\mu(dz),\quad x,y\in D.
\]
This when combined with (\ref{eq4.2}) and (\ref{eq4.3}) shows  that there exists $C>0$ such that
\begin{equation}
\label{eq.qe.ge}
 \int_DG_D(x,z)G_D(z,y)\,\mu(dz)\le C G_D(x,y),\quad x,y\in D.
\end{equation}
From this we conclude that for every nonnegative Borel measure $\nu$ on $D$,
\[
 \int_DG_D(x,z)\Big(\int_DG_D(z,y)\,\nu(dy)\Big)\,\mu(dz)\le C \int_DG_D(x,y)\,\nu(dy),\quad x,y\in D.
\]
Equivalently,
\begin{equation}
\label{eq4.4abcd}
 \int_DG_D(x,z)R^D\nu(z)\,\mu(dz)\le C R^D\nu(x),\quad x,y\in D.
\end{equation}
It is well known (see \cite[Theorem 17, page 230]{DellacherieMeyer}) that each $(P^D_t)_{t\ge 0}$-excessive function is an increasing limit of functions
of the form $R^D\nu$ for some nonnegative Borel measure $\nu$. Therefore from (\ref{eq4.4abcd}) it follows that  for every excessive function $e$,
\[
\int_DG_D(x,z)e(z)\,\mu(dz)\le C e(x),\quad x,y\in D.
\]
Taking $e=\phi$ (it is excessive as a minimum of excessive functions), we get
\[
\sup_{x\in D}\int_D\frac{\phi(z)}{\phi(x)}G_D(x,z)\,\mu(dz)<\infty.
\]
From this, Proposition \ref{prop4.wtp1},  and  \cite[Theorem 9.1]{Hansen}
we get the desired result.
\end{proof}

\begin{theorem}
Assume that $a\in (\lambda_1^D,\lambda_1^{D_0})$. Then there exists at most one integral solution to {\rm{(\ref{eq1.1})}}.
\end{theorem}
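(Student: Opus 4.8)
The plan is to assume that \eqref{eq1.1} admits two integral solutions $u_1,u_2$, with reaction measures $\nu_1,\nu_2$, and to prove $u_1=u_2$. The starting point is the eigenfunction identity already implicit in the introduction. Combining Proposition \ref{prop4.4} with $u_i\cdot\nu_i=\nu_i$ (Proposition \ref{prop4.2ms}), one checks that for every $\eta\in D(\EE^{\nu_i}_D)$,
\[
\EE^{\nu_i}_D(u_i,\eta)=\EE_D(u_i,\eta)+\int_D\tilde u_i\tilde\eta\,d\nu_i=a(u_i,\eta)-\int_D\tilde\eta\,d\nu_i+\int_D\tilde\eta\,d\nu_i=a(u_i,\eta),
\]
so $u_i$ is a strictly positive eigenfunction of $-(\Delta^{\alpha/2})_{|D}+\nu_i$ with eigenvalue $a$. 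By Proposition \ref{prop4.2} the measure $\nu_i$ has compact support and bounded potential, so Lemma \ref{lm4.1} gives $G^{\nu_i}_D\sim G_D$; hence $-(\Delta^{\alpha/2})_{|D}+\nu_i$ generates a positivity-improving semigroup (Feynman--Kac together with $p_D>0$) and is intrinsically ultracontractive. Therefore a positive eigenfunction must be the ground state: $a=\lambda_1^{\nu_i}$ is the bottom of the spectrum, the eigenspace is one-dimensional, and $u_i\sim\varphi_1^{\nu_i}\sim\varphi_1^D$. In particular there is $\varepsilon>0$ with $\varepsilon\varphi_1^D\le u_1\wedge u_2$ q.e.

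Next I would produce a genuine solution lying below both $u_i$. Since $a>\lambda_1^D$, the function $\varepsilon\varphi_1^D$, with $\varepsilon$ small enough that $\varepsilon\varphi_1^D\le\mathbb I_{D\setminus\overline D_0}$, is an integral subsolution of \eqref{eq1.1}, with reaction measure $0$ and $\mu=(\lambda_1^D-a)\varepsilon\varphi_1^D\cdot m\le0$. By Proposition \ref{prop3.2} the minimum $u_1\wedge u_2$ is an integral supersolution, it is bounded (Proposition \ref{prop4.1}(ii)), and $\varepsilon\varphi_1^D\le u_1\wedge u_2$ q.e. Applying Proposition \ref{prop3.3} with the nondecreasing $f(t)=at$ then yields an integral solution $w$ with $\varepsilon\varphi_1^D\le w\le u_1\wedge u_2\le u_i$ q.e., $i=1,2$. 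It thus suffices to show that whenever $w\le u_1$ are two integral solutions one has $w=u_1$ (and likewise $w=u_2$, whence $u_1=u_2=w$).

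For this I would work with the perturbed form $\EE^{\nu_1}_D$, for which $a=\lambda_1^{\nu_1}$ is the bottom of the spectrum; this is exactly what repairs the failure of any direct coercivity estimate for $-(\Delta^{\alpha/2})_{|D}-a$ when $a>\lambda_1^D$. Put $\psi=u_1-w\ge0$. Using the eigenfunction identity for $u_1$ and Proposition \ref{prop4.4} for $w$ one computes, for $\eta\in D(\EE^{\nu_1}_D)$,
\[
\EE^{\nu_1}_D(\psi,\eta)=a(\psi,\eta)+\int_D\tilde\eta\,d\nu_w-\int_D\tilde w\tilde\eta\,d\nu_1.
\]
Since $w,u_1\in C_0(D)$ and $\mathrm{supp}[\nu_w]\subset\{w=1\}\cap(D\setminus D_0)$ by Proposition \ref{prop4.2ms}, on $\mathrm{supp}[\nu_w]$ one has $1=w\le u_1\le1$, so $\tilde\psi=0$ $\nu_w$-a.e.; and on $\mathrm{supp}[\nu_1]\subset\{u_1=1\}$ one has $\tilde\psi=1-\tilde w\ge0$ and $\tilde w\ge0$. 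Taking $\eta=\psi$ gives
\[
\EE^{\nu_1}_D(\psi,\psi)=a\|\psi\|_{L^2}^2-\int_D\tilde w\tilde\psi\,d\nu_1\le a\|\psi\|_{L^2}^2=\lambda_1^{\nu_1}\|\psi\|_{L^2}^2,
\]
while $\EE^{\nu_1}_D(\psi,\psi)\ge\lambda_1^{\nu_1}\|\psi\|_{L^2}^2$ by the variational characterization of $\lambda_1^{\nu_1}$. Hence equality holds, so $\psi$ is a ground state of $-(\Delta^{\alpha/2})_{|D}+\nu_1$; by simplicity $\psi=\beta u_1$ with $\beta\in[0,1)$ (as $0<w=(1-\beta)u_1\le u_1$). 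Then $\nu_w=(1-\beta)\nu_1$, and on the contact set $\mathrm{supp}[\nu_1]$, which is nonempty because $a>\lambda_1^D$ forces $\nu_1\ne0$, both $w$ and $u_1$ equal $1$, so $1=w=(1-\beta)u_1=1-\beta$, i.e. $\beta=0$ and $w=u_1$.

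The main obstacle is precisely the passage carried out in the last paragraph: because $a$ exceeds $\lambda_1^D$, no naive energy comparison for $-(\Delta^{\alpha/2})_{|D}$ can succeed, and the entire argument hinges on re-reading each solution as the ground state of its own perturbed operator $-(\Delta^{\alpha/2})_{|D}+\nu_i$. This is legitimate only because $\nu_i$ has compact support in $D$, which is what lets Lemma \ref{lm4.1} deliver $G^{\nu_i}_D\sim G_D$ and hence intrinsic ultracontractivity, simplicity of the ground state, and the comparability $u_i\sim\varphi_1^D$ needed to launch the sub/supersolution scheme; establishing these spectral consequences of the Green-function comparison is the delicate part of the proof.
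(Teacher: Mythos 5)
Your proposal is correct in outline, and its middle portion coincides with the paper's Step 1: the paper likewise takes $c\varphi_1^D$ as an integral subsolution (same computation, with $\mu=c(\lambda_1^D-a)\varphi_1^D\cdot m$), takes $u_1\wedge u_2$ as an integral supersolution via Proposition \ref{prop3.2}, obtains the lower bound $\ge\varepsilon\varphi_1^D$ by writing the Feynman--Kac representation and combining Propositions \ref{prop4.1}, \ref{prop4.2}, Lemma \ref{lm4.1} and \eqref{eq4.1} (the paper does this for the minimum $u_1\wedge u_2$ directly, you per solution --- immaterial), and then invokes Proposition \ref{prop3.3} to reduce to ordered solutions. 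Where you genuinely diverge is the ordered case. The paper's Step 2 never touches the spectrum of the perturbed operator: from Proposition \ref{prop4.4} and symmetry of $\EE_D$ it gets $\int_D u_2\,d\nu_1=\int_D u_1\,d\nu_2$, hence by Proposition \ref{prop4.2ms} $\int_D u_1u_2\,(d\nu_1-d\nu_2)=0$; it then regards $u_1,u_2$ as solutions of obstacle problems with ordered data $g_i=au_i$ and imports $\nu_1\le\nu_2$ from \cite[Proposition 3.12]{K:SM2}, so that $\nu_1=\nu_2$ by strict positivity of $u_1u_2$, leaving $u_2-u_1=aR^D(u_2-u_1)$, which is impossible for $a>\lambda_1^D$ unless $u_1=u_2$. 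You instead run a Rayleigh-quotient rigidity argument in $\EE^{\nu_1}_D$: the equality $\EE^{\nu_1}_D(\psi,\psi)=a\|\psi\|^2_{L^2}$ at the bottom of the spectrum forces $\psi=\beta u_1$ by simplicity of the ground state, and the contact set kills $\beta$. Your route frees Step 2 from the measure-comparison result (although Proposition \ref{prop3.3} still uses it internally) and makes literal the heuristic that each solution is the ground state of its own perturbed operator; the paper's route is more elementary, requiring nothing about $-(\Delta^{\alpha/2})_{|D}+\nu$ beyond the Green-function comparison already needed in Step 1.

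Two points in your Step 2 need patching, though both are repairable within your framework. First, the justification of $\int_D\tilde\psi\,d\nu_w=0$ via ``$1=w\le u_1\le 1$ on $\mathrm{supp}[\nu_w]$'' is shaky on $(\partial D_0)\cap D$: there the obstacle is $+\infty$, so $u_1\le 1$ is not immediate (it holds $m$-a.e. only off $\overline D_0$, while $\mathrm{supp}[\nu_w]\subset D\setminus D_0$ may meet $\partial D_0$). The clean fix is one line: since $u_1$ is quasi-continuous and $w\le u_1\le \mathbb I_{D\setminus\overline D_0}$ $m$-a.e., the minimality condition (c) for $w$ with $\eta=u_1$ gives $\int_D(u_1-w)\,d\nu_w=0$ directly. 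Second, the spectral package you assert --- $a=\lambda_1^{\nu_1}$, attainment and simplicity of the ground state --- does not follow from ``intrinsic ultracontractivity'' of the perturbed semigroup, which neither follows directly from $G^{\nu_1}_D\sim G_D$ nor is actually needed. What you need, and should say, is: by \eqref{ab-fk_1} the kernel of $T^{D,\nu_1}_t$ is dominated by $p_D(t,\cdot,\cdot)\in L^2(D\times D)$, so the semigroup is Hilbert--Schmidt and the spectrum discrete; it is positivity improving since $p_D>0$ and $A^{\nu_1}_t<\infty$ a.s.; Perron--Frobenius then yields simplicity and that the positive eigenfunction $u_1$ is the ground state. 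Likewise ``$u_i\sim\varphi_1^{\nu_i}\sim\varphi_1^D$'' is both more than you prove and more than you use: the only bound required, $u_i\ge\varepsilon\varphi_1^D$, should be derived as in the paper from $u_i=aR^{D,\nu_i}u_i$, $G^{\nu_i}_D\ge cG_D$ and \eqref{eq4.1}. With these repairs your argument is complete and constitutes a genuine alternative to the paper's Step 2.
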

\begin{proof}
Let $u_1,u_2$ be two integral solutions to (\ref{eq1.1}). We divide the proof into two steps.

Step 1. We shall show that without loss of generality we may assume that $u_1\le u_2$.
Assume that whenever we know that  $w, v$ are integral solutions to (\ref{eq1.1}) such that $w\le v$, then $w=v$.
By Proposition \ref{prop3.3},  $u:=u_1\wedge u_2$ is a an integral supersolution to (\ref{eq1.1}). It is clear that for a sufficiently
small $c>0$, $c\varphi_1^D\le \mathbb I_{D\setminus \overline D_0}$. Therefore, since
\[
-(\Delta^{\alpha/2})_{|D}(c\varphi_1^D)=\lambda_1^Dc\varphi_1^D=ac\varphi_1^D-c(a-\lambda_1^D)\varphi_1^D
\]
and $a>\lambda_1^D$, we see that  $c\varphi_1^D$ is a an integral subsolution to (\ref{eq1.1}) (cf. \eqref{eq.smfe1}).  By the definition of  an integral supersolution
to (\ref{eq1.1}), there exist  nonnegative measures  $\mu,\nu\in\mathbb M_0(D)$ such that
\[
u=aR^Du+R^D\mu-R^D\nu,
\]
and $\nu$ is the reaction measure for $u$.
 By Proposition \ref{prop4.2ms}, the above equation may be equivalently rewritten as
\[
u=aR^Du+R^D\mu-R^D(u\cdot\nu).
\]
By Lemma \ref{lm2.4.1} there exists a MAF $M$ such that for any $x\in D$,
\begin{equation}
\label{eq2.4.6-fk}
u(X_t)=\int_t^{\tau_D}au(X_r)\,dr+\int_t^{\tau_D}\,dA^\mu_r-\int_t^{\tau_D}u(X_r)\,dA^\nu_r-\int_t^{\tau_D}\,dM_r,\quad t\in [0,\tau_D],\quad P_x\mbox{-a.s.}
\end{equation}
By the integration by parts formula applied to the product $e^{-A_t}u(X_t)$, we get
\begin{equation}
\label{eq2.4.6-fk}
e^{-A^\nu_t}u(X_t)=\int_t^{\tau_D}ae^{-A^\nu_r}u(X_r)\,dr+\int_t^{\tau_D}e^{-A^\nu_r}\,dA^\mu_r-\int_t^{\tau_D}e^{-A^\nu_r}\,dM_r,\quad t\in [0,\tau_D],
\end{equation}
$P_x\mbox{-a.s.}$ Taking expectation $E_x$ of both sides of the above equation with $t=0$ yields
\[
u(x)=E_x\int_0^{\tau_D}ae^{-A^\nu_r}u(X_r)\,dr+E_x\int_0^{\tau_D}e^{-A^\nu_r}\,dA^\mu_r,\quad x\in D.
\]
Therefore, by \eqref{ab-fk_1},
\[
u(x)=a\int_DG^\nu_D(x,y) u(y)\,dy+\int_DG^\nu_D(x,y)\,\mu(dy),\quad x\in D.
\]
By Proposition \ref{prop4.1}, Proposition \ref{prop4.2} and Lemma \ref{lm4.1},  $G^\nu_D\sim G_D$.
This when combined with the above equation and (\ref{eq4.1}) gives
\[
u(x)\ge ac\int_D G_D(x,y) u(y)\,dy\ge acc_1\varphi_1^D(x)\int_D\varphi_1^D(y)u(y)\,dy,\quad x\in D,
\]
so $u\ge c\varphi_1^D$ for some $c>0$. Hence, for a sufficiently small $c>0$, $c\varphi_1^D$
is a an integral subsolution to (\ref{eq1.1}) such that $c\varphi_1^D\le u$. By Proposition \ref{prop3.3}, there exists an integral solution $v$ to (\ref{eq1.1})
such that $c\varphi_1^D\le v\le u$. Hence $v\le u_1$ and $v\le u_2$. By the assumption of  Step 1, $u_1=v=u_2$.

Step 2. Assume that $u_1\le u_2$. Let $\nu_1,\nu_2$ be the reaction measures for  $u_1$ and $u_2$, respectively.  Then, by Proposition \ref{prop4.4},
\[
\EE_D(u_1,u_2)+\int_Du_2\,d\nu_1=a(u_1,u_2),\qquad \EE_D(u_2,u_1)+\int_Du_1\,d\nu_2=a(u_2,u_1).
\]
Hence
\[
\int_Du_2\,d\nu_1-\int_Du_1\,d\nu_2=0.
\]
From this and Proposition \ref{prop4.2ms} we conclude that
\begin{equation}
\label{eq.min.ext1}
\int_Du_1u_2(d\nu_1-d\nu_2)=0.
\end{equation}
We can regard $u_i$ as a solution to the following obstacle problem
\[
\max\big\{-(\Delta^{\alpha/2})_{|D} w_i-g_i,w_i- \mathbb I_{D\setminus\overline D_0}\big\}=0,
\]
where $g_i=a u_i,\, i=1,2$. Since $u_1\le u_2$, we have $g_1\le g_2$. Applying  \cite[Proposition 3.12]{K:SM2} yields $d\nu_1\le d\nu_2$. This when combined with (\ref{eq.min.ext1}) and the fact that
$u_1,u_2$ are strictly positive implies that  $\nu_1=\nu_2$. Therefore, we have
\[
(u_2-u_1)=aR^D(u_2-u_1).
\]
Thus,
\begin{equation}
\label{eq4.9}
(u_2-u_1)(x)=a\int_DG_D(x,y)(u_2-u_1)(y)\,dy,\quad x\in D.
\end{equation}
We have assumed that $u_2-u_1\ge 0$. Striving for a contradiction, suppose that $(u_2-u_1)(x)>0$ for some $x\in D$. Then  continuity of $u_1,u_2$ and (\ref{eq4.9})  would imply that
$u_2-u_1$ is strictly positive on $D$, in contradiction with the fact that $a>\lambda_1^D$.
\end{proof}

\begin{remark}
\label{uw.rem1}
All the results of the paper hold for $\alpha=2$. In case $\alpha=2$ the proofs run in the same way as in case $\alpha\in(0,2)$, the  only difference being in the proof of Proposition \ref{prop4.wtp1} and Lemma \ref{lm4.1}. In case $d\ge 3$, Proposition \ref{prop4.wtp1} follows from \cite[Theorem 3.1]{Ri}, and in case $d=2$ it follows from \cite[Corollary 9.6]{Hansen} - we need however $D$ to be finitely connected.
As to the proof of Lemma \ref{lm4.1}, in  case    $d=2$,  to get (\ref{eq.qe.ge}), we use \cite[Theorem 6.24]{CZ},
and  in case $d\ge 3$,  we use \cite[Theorem 6.5]{CZ}.
Instead of \cite[Theorem 1.2]{ChenSong}, we use \cite[Lemma 6.7]{CZ}.


Ultracontractivity of the semigroup generated by  $\Delta_D$ follows from  \cite[Theorem 9.3]{DS}. That Lipschitz bounded domains are Dirichlet regular is well known (see, e.g., \cite[page 350]{BH}).
\end{remark}

\subsection*{Acknowledgements}
{\small This work was supported by Polish National Science Centre
(Grant No. 2017/25/B/ST1/00878).}
\smallskip

{\small Data sharing not applicable to this article as no datasets were generated or analysed during the current study.}

\end{document}